\def\Z{\mathbb{Z}}
\def\mQ{\mathcal{Q}}
\def\rev{\operatorname{rev}}
\def\Atilde{\tilde{A}}
\def\Apoint{A^\bullet}
\newtheorem{thm}{Theorem}[section]
\newtheorem*{thm*}{Theorem}
\newtheorem{lem}[thm]{Lemma}
\newtheorem{cor}[thm]{Corollary}
\newtheorem*{cor*}{Corollary}
\newtheorem{eg}[thm]{Example}
\theoremstyle{definition}
\newtheorem{Def}[thm]{Definition}
\theoremstyle{remark}
\newtheorem{rmk}[thm]{Remark}
\newcommand{\Dfn}[1]{\emph{#1}} 
\title{Counting the number of elements in the mutation classes of $\Atilde_n-$quivers}
\author{Janine Bastian}
\address{Institut f\"ur Algebra, Zahlentheorie und Diskrete Mathematik, Leibniz
  Universit\"at Hannover, Welfengarten 1, D-30167 Hannover, Germany}
\email{bastian@math.uni-hannover.de}
\urladdr{http://www.iazd.uni-hannover.de/~bastian/}
\author{Thomas Prellberg}
\address{School of Mathematical Sciences, Queen Mary University of London,
  Mile End Road, London E1 4NS, United Kingdom}
\email{t.prellberg@qmul.ac.uk}
\urladdr{http://maths.qmul.ac.uk/~tp/}
\author{Martin Rubey}
\address{Institut f\"ur Algebra, Zahlentheorie und Diskrete Mathematik, Leibniz
  Universit\"at Hannover, Welfengarten 1, D-30167 Hannover, Germany}
\email{martin.rubey@math.uni-hannover.de}
\urladdr{http://www.iazd.uni-hannover.de/~rubey/}
\author{Christian Stump}
\address{LaCIM, Universit\'e du Qu\'ebec \`a Montr\'eal, 201,
Pr\'esident-Kennedy, 4th floor, Montr\'eal (Qu\'ebec) H2X 3Y7 ,
Canada}
\email{christian.stump@lacim.ca}
\urladdr{http://homepage.univie.ac.at/christian.stump/}
\begin{document}
\begin{abstract}
  In this article we prove explicit formulae for the number of
  non-isomorphic cluster-tilted algebras of type $\Atilde_n$ in the
  derived equivalence classes.  In particular, we obtain the number
  of elements in the mutation classes of quivers of type $\Atilde_n$.
  As a by-product, this provides an alternative proof for the number
  of quivers mutation equivalent to a quiver of Dynkin type $D_n$
  which was first determined by Buan and Torkildsen in
  \cite{BuanTorkildsen2009}.
\end{abstract}
\maketitle


\section{Introduction}

Quiver mutation is a central element in the recent theory of cluster algebras
introduced by Fomin and Zelevinsky in \cite{FominZelevinsky2002}.
It is an elementary operation on quivers which generates an equivalence
relation.  The mutation class of a quiver $Q$ is the class of all quivers which
are mutation equivalent to $Q$.

The mutation class of quivers of type $A_n$ is the class containing
all quivers mutation equivalent to a quiver whose underlying graph is
the Dynkin diagram of type $A_n$, shown in
Figure~\ref{fig:Dynkin}$(a)$.  This mutation class was described by
Caldero, Chapoton and Schiffler~\cite{MR2187656}
in terms of triangulations.  An explicit characterisation of the
quivers themselves can be found in Buan and Vatne
in~\cite{BuanVatne2008}.  The corresponding task for type $D_n$,
shown in Figure~\ref{fig:Dynkin}$(b)$, was accomplished by Vatne
in~\cite{Vatne2008}.  Furthermore, an explicit formula for the number
of quivers in the mutation class of type $A_n$ was given by
Torkildsen in~\cite{Torkildsen2008} and of type $D_n$ by Buan and
Torkildsen in \cite{BuanTorkildsen2009}.

In this article, we consider quivers of type $\Atilde_{n-1}$. That is, all
quivers mutation equivalent to a quiver whose underlying graph is the extended
Dynkin diagram of type $\Atilde_{n-1}$, i.e., the $n$-cycle, see
Figure~\ref{fig:Dynkin}$(c)$.  If this cycle is oriented, then we get the
mutation class of $D_n$, see Fomin et al.\ in \cite{FominShapiroThurston2008}
and Type IV in \cite{Vatne2008}.
If the cycle is non-oriented, we get the mutation classes of $\Atilde_{n-1}$,
studied by the first named author in \cite{Bastian2009}.  The purpose of this
paper is to give an explicit formula for the number of quivers in the mutation
classes of quivers of type $\Atilde_{n-1}$.

A cluster-tilted algebra $C$ of type $\Atilde_{n-1}$ is finite
dimensional over an algebraically closed field $K$.  Therefore, there
exists a quiver $Q$ which is in one of the mutation classes of
$\Atilde_{n-1}$ (see for instance Buan, Marsh and
Reiten~\cite{BuanMarshReiten2008} or Assem et
al.~\cite{AssemBruestleCharbonneauJodoinPlamondon2009}) and an
admissible ideal I of the path algebra $KQ$ of $Q$ such that $C \cong
KQ/I$.  Furthermore, two cluster-tilted algebras of the same type are
isomorphic if and only if the corresponding quivers are isomorphic as
directed graphs.

Thus, we also obtain the number of non-isomorphic cluster-tilted
algebras of type $\Atilde_{n-1}$.  In fact, we prove a more refined
counting theorem.  Namely, one can classify these algebras up to
derived equivalence, see \cite{Bastian2009}.  Each equivalence class
is determined by four parameters, $r_1$, $r_2$, $s_1$ and $s_2$,
where $r_1+2r_2+s_1+2s_2=n$, up to interchanging $r_1$, $r_2$ and
$s_1$, $s_2$.  Without loss of generality, we can therefore assume
that $r_1<s_1$ or $r_1=s_1$ and $r_2\leq s_2$.  Given positive
integers $r$ and $s$ with $r+s=n$, the set of equivalence classes
with $r_1+2r_2=r$ and $s_1+2s_2=s$ corresponds to one mutation class
of quivers.

\begin{thm*}
  The number of cluster-tilted algebras in the derived equivalence
  classes with parameters $r_1, r_2, s_1$ and $s_2$ is given by
  \begin{multline*}
    \sum_{k|r,k|r_2,k|s,k|s_2}\frac{\phi(k)}k(-1)^{(r+r_2+s+s_2)/k}\\
    \sum_{\substack{i,j\geq0\\
        (i,j)\neq(0,0)}}\frac{(-1)^{i+j}}{2(i+j)}\binom{2i}{i,2i-r/k,r_2/k,(r-r_2)/k-i}
    \binom{2j}{j,2j-s/k,s_2/k,(s-s_2)/k-j}
  \end{multline*}
  if $r_1<s_1$ or $r_1=s_1$ and $r_2<s_2$.  Otherwise, if $r_1=s_1$
  and $r_2=s_2$, the number is
  \begin{multline*}
    2^{r-2r_2-2}\binom{r}{r_2,r_2,r-2r_2}\\ %
    +\sum_{\substack{k|r,k|r_2\\i,j\geq0\\(i,j)\neq(0,0)}}\frac{\phi(k)}k
    \frac{(-1)^{i+j}}{4(i+j)}
    \binom{2i}{i,2i-r/k,r_2/k,(r-r_2)/k-i}\binom{2j}{j,2j-r/k,r_2/k,(r-r_2)/k-j}.
  \end{multline*}
  Here $\phi(k)$ is Euler's totient function, i.e., the number of
  $1\leq d<k$ coprime to $k$ and $\binom{m}{m_1,m_2,\dots,m_\ell}$
  with $m_1+m_2+\dots+m_\ell=m$ denotes the multinomial coefficient.

  In particular, for $r = r_1 + 2r_2$ and $s = s_1 +2s_2$, we obtain
  the number $\tilde{a}(r,s)$ of quivers mutation equivalent to a
  non-oriented $n$-cycle with $r$ arrows oriented in one direction
  and $s$ arrows oriented in the other direction:
  $$
  \tilde{a}(r,s) =
  \begin{cases}
    \frac{1}{2} \sum\limits_{k|r,k|s}\frac{\phi(k)}{r+s}%
    \binom{2r/k}{r/k}\binom{2s/k}{s/k}& \text{if } r< s,\\[12pt]
    \frac{1}{2} \left(\frac{1}{2} \binom{2r}{r}%
      + \sum\limits_{k|r} \frac{\phi(k)}{4r}%
      \binom{2r/k}{r/k}^2 \right) & \text{if } r=s.
  \end{cases}
  $$
\end{thm*}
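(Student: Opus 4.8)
The plan is to translate the enumeration of quivers into a generating‑function computation combined with a Burnside‑type symmetrization, and then to obtain the formula for $\tilde{a}(r,s)$ as a summation consequence of the refined count. Recall from \cite{Bastian2009}, together with the annulus description of type $\Atilde_{n-1}$ in \cite{FominShapiroThurston2008}, that a quiver in the mutation class of a non-oriented $n$-cycle is encoded by a cyclic arrangement of ``blocks'' around the central cycle: $r$ arrows run in one orientation and $s$ in the other, the blocks on the $r$-side split into $r_1$ \emph{simple} blocks and $r_2$ \emph{double} blocks (so that $r_1+2r_2=r$), and likewise on the $s$-side, while each block carries an internal planar, type-$A$ triangulation datum. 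Two such configurations give isomorphic quivers precisely when they are related by a rotation of the $(r+s)$-cycle, and, when the two sides are indistinguishable, also by the reflection swapping them. The first step is to make this encoding precise so that the four parameters $r_1,r_2,s_1,s_2$ read off exactly the block statistics.

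The second step is to count the \emph{pointed} configurations, i.e.\ those with a marked starting position, before quotienting by symmetry. Each block's internal structure is a planar Catalan-type object, so I would introduce generating functions $R(x)$ and $S(x)$ for the two sides, each satisfying an algebraic equation encoding the recursive planar structure. Lagrange inversion of these equations should produce the four-part multinomial coefficients $\binom{2i}{i,\,2i-r/k,\,r_2/k,\,(r-r_2)/k-i}$, with the alternating sign $(-1)^{i}$ arising from inverting the defining equation. Passing from pointed configurations to genuine cyclic arrangements of the combined $i+j$ blocks then contributes the factor $\tfrac{1}{2(i+j)}$ via the cycle lemma. The delicate point here is to separate the two layers of cyclicity — the internal Catalan structure of each block versus the global arrangement around the cycle — so that the normalizations come out exactly as in the statement.

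The third step is the symmetrization. Counting quivers up to isomorphism means counting these cyclic configurations up to the rotation group $C_{r+s}$, so I would apply Burnside's lemma: a rotation of order $k$ fixes a configuration only when $k\mid r$, $k\mid r_2$, $k\mid s$ and $k\mid s_2$, and the fixed configurations are enumerated by the ``$1/k$-scaled'' version of the pointed count, which yields the outer sum $\sum_k \tfrac{\phi(k)}{k}(-1)^{(r+r_2+s+s_2)/k}$. In the degenerate situation $r_1=s_1$ and $r_2=s_2$ (forcing $r=s$) there is the extra reflection swapping the two sides, so the acting group is enlarged by a $C_2$-factor; its contribution produces the separate symmetric formula, and the isolated term $2^{\,r-2r_2-2}\binom{r}{r_2,r_2,r-2r_2}$ is precisely the ``square-root'' count of configurations fixed by the swap.

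Finally, I would deduce $\tilde{a}(r,s)$ by summing the refined formula over all admissible $(r_2,s_2)$. For $r\neq s$ every term uses the first formula, and the key collapse is the elementary identity
\[
\binom{2i}{i,\,2i-R,\,R_2,\,R-R_2-i}=\binom{2i}{i}\binom{i}{2i-R}\binom{R-i}{R_2},
\qquad R=\tfrac{r}{k},\ R_2=\tfrac{r_2}{k},
\]
whence the alternating sum over $R_2$ gives $\sum_{R_2}(-1)^{R_2}\binom{R-i}{R_2}=(1-1)^{R-i}$, vanishing unless $i=R=r/k$ and leaving $\binom{2r/k}{r/k}$; the $s_2$-sum likewise forces $j=s/k$ and yields $\binom{2s/k}{s/k}$, while the signs cancel and $\tfrac{\phi(k)}{k}\cdot\tfrac{1}{2(i+j)}$ reduces to $\tfrac{\phi(k)}{2(r+s)}$, giving exactly the stated $r\neq s$ expression. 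The case $r=s$ runs identically, with the diagonal (second-formula) terms contributing $\tfrac14\binom{2r}{r}=\tfrac12\cdot\tfrac12\binom{2r}{r}$ through $\sum_{r_2}2^{\,r-2r_2-2}\binom{r}{r_2,r_2,r-2r_2}=\tfrac14[t^{0}](t^{1/2}+t^{-1/2})^{2r}=\tfrac14\binom{2r}{r}$. I expect the main obstacle to lie in the first two steps: pinning down the combinatorial model of the blocks and the exact algebraic equations for $R(x)$ and $S(x)$ so that Lagrange inversion reproduces those four-part multinomials with the correct signs, and reconciling the internal and global cyclic symmetries so that the $\tfrac{1}{i+j}$ and $\tfrac{\phi(k)}{k}$ factors emerge with the right normalization; by comparison, the Burnside bookkeeping and the final summation are routine.
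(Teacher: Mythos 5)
Your overall strategy is the paper's: encode a quiver of type $\Atilde_{n-1}$ as a cyclic arrangement of blocks (the paper's class $B$), count rotation orbits via the unlabelled cycle construction $\sum_k\frac{\phi(k)}{k}\log\frac{1}{1-B}$ (your Burnside step), and resolve the clockwise/anti-clockwise ambiguity by counting configurations fixed under reversal. Your final collapse identities are correct — the factorization $\binom{2i}{i,2i-R,R_2,R-R_2-i}=\binom{2i}{i}\binom{i}{2i-R}\binom{R-i}{R_2}$, the vanishing of $\sum_{R_2}(-1)^{R_2}\binom{R-i}{R_2}$ forcing $i=r/k$ and $j=s/k$, and $\sum_{r_2}2^{r-2r_2}\binom{r}{r_2,r_2,r-2r_2}=\binom{2r}{r}$ — though the paper runs the logic the other way: it proves the unrefined coefficient $\frac{1}{2(r+s)}\binom{2r}{r}\binom{2s}{s}$ directly by differentiating the logarithm (Lemma~\ref{lem:Gessel}) and then gets the refined multinomials from the substitution $B(p,q,x,y)=B(p+(x-1)p^2,q+(y-1)q^2,1,1)$ and binomial expansion (Lemma~\ref{lem:Thomas}). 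In particular the signs $(-1)^{(r+r_2+s+s_2)/k}(-1)^{i+j}$ arise from expanding $(1+(x-1)p)^i$, not from Lagrange inversion of a block equation; your second step is exactly the place you yourself flag as uncertain, and it becomes routine once reorganized this way.

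The genuine gap is the symmetric case. You assert that the reflection-fixed configurations are given by the ``square-root'' term $2^{r-2r_2-2}\binom{r}{r_2,r_2,r-2r_2}$ and file the whole step under routine Burnside bookkeeping, but you give no way to compute this fixed-point count, and it is the least routine part of the argument. A fixed realization is a cyclic arrangement equal to its own reversal (with every base arrow flipped) up to rotation, i.e.\ a palindromic necklace of blocks; to count these the paper must (i) show that every symmetric quiver decomposes as $(L,\rev(L))$ for a linear list $L$ of blocks with \emph{exactly two} lists per symmetric quiver — which requires the primitive-word argument $L=(X,Y,X,\dots,Y)$ to handle lists with $L=\rev(L)$ (Theorem~\ref{th:symmetric quivers}) — and (ii) prove a separate counting lemma that lists of blocks with $r+\ell$ vertices and $r_2$ oriented $3$-cycles number $2^{r-2r_2}\binom{r}{r_2,r_2,r-2r_2}$, which needs its own decomposition (lists of pure $3$-cycle blocks have generating function $1/\sqrt{1-4x}$, followed by an insertion argument for the $r_1$ plain arrows). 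Neither of these follows from a generic ``fixed points of an involution on necklaces'' principle: without (i) you cannot justify the factor $\tfrac12$ relating lists to symmetric quivers, and without (ii) you do not have the multinomial at all. Relatedly, your reflection is not a dihedral symmetry of the $(r+s)$-cycle but the reversal of the planar embedding, so the ``enlarge the group by $C_2$'' framing needs care; the paper sidesteps this by observing that each quiver has at most two realizations, so the realization count weights symmetric quivers once and all others twice.
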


Additionally, we obtain the number of quivers in the mutation class
of a quiver of Dynkin type $D_n$. This formula was first determined in
\cite{BuanTorkildsen2009}:

\begin{cor*} The number of quivers of type $D_n$, for $n \geq 5$, is given by
  $$\tilde{a}(0,n)=\sum_{d|n} \frac{\phi(n/d)}{2n} \binom{2d}{d}.$$
  The number of quivers of type $D_4$ is $6$.
\end{cor*}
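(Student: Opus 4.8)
The plan is to derive the Corollary as the specialization of the Theorem to the \emph{oriented} $n$-cycle. As recalled in the introduction, orienting the $n$-cycle cyclically produces precisely the mutation class of $D_n$, so the number of quivers of type $D_n$ equals $\tilde{a}(0,n)$ (equivalently $\tilde{a}(n,0)$, the two being equal by the evident $r\leftrightarrow s$ symmetry of the formula). Since $n\geq 5$ forces $0\neq n$, I would invoke the first branch of the Theorem with $r=0$ and $s=n$.

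First I would check that the $r=0$ substitution is legitimate, i.e.\ that the counting argument behind the Theorem, which is phrased for a non-oriented cycle with $r,s\geq 1$, remains valid when one of the two arc-directions is empty. Concretely, I would revisit the combinatorial model and the Burnside/M\"obius count used to prove the Theorem and confirm that setting $r=0$ collapses the factor associated to the first direction to the empty configuration, so that nothing is lost or double-counted. This is the step I expect to be the main obstacle: one must ensure that passing from the non-oriented to the oriented cycle does not introduce additional automorphisms or identifications beyond those already accounted for by the cyclic symmetry encoded in $\phi(k)$.

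Granting the specialization, the algebra is routine. In the sum $\frac12\sum_{k\mid r,\,k\mid s}\frac{\phi(k)}{r+s}\binom{2r/k}{r/k}\binom{2s/k}{s/k}$ I would set $r=0$ and $s=n$: the divisibility constraint $k\mid 0$ is vacuous, the factor $\binom{2\cdot 0/k}{0/k}=\binom{0}{0}$ equals $1$, and the remaining sum ranges over $k\mid n$, giving $\tilde{a}(0,n)=\frac{1}{2n}\sum_{k\mid n}\phi(k)\binom{2n/k}{n/k}$. Reindexing by $d=n/k$, which permutes the divisors of $n$, turns this into $\sum_{d\mid n}\frac{\phi(n/d)}{2n}\binom{2d}{d}$, as claimed.

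Finally, the exceptional value $\tilde{a}(0,4)$ must be treated by hand: plugging $n=4$ into the formula yields $10$, whereas $D_4$ has only $6$ quivers in its mutation class. The discrepancy reflects the extra symmetry of $D_4$ (its $S_3$-triality), which is not captured by the cyclic symmetry underlying the $n\geq 5$ argument. I would therefore verify the number $6$ directly, by explicitly enumerating the mutation class of the oriented $4$-cycle, which closes the remaining case.
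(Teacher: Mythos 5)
There is a genuine gap, and it sits exactly at the step you flag as ``the main obstacle'' but then propose to resolve by rechecking the Burnside/cycle count. The problem is not with the counting of cyclic arrangements at $r=0$ --- the generating-function specialisation is harmless and gives the stated sum, and your reindexing $d=n/k$ is correct. The problem is that the objects the formula counts at $r=0$ are no longer quivers of type $\Atilde_{n-1}$ in the sense of Definition~\ref{dfn:description_mutation_class} (which requires a \emph{non-oriented} cycle, hence $r,s\geq 1$), nor are they literally quivers of type $D_n$. They are cyclic arrangements of elements of $B$ with all base arrows oriented the same way, and the distinguished ``main cycle'' of such an arrangement may have length $1$ or $2$, i.e., be a loop or an oriented $2$-cycle --- configurations that are excluded by the paper's standing conventions on quivers. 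So one cannot simply say that the oriented $n$-cycle's mutation class ``is'' the $r=0$ case of the earlier count; a new identification is needed.

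This identification is the actual content of the paper's proof: using Vatne's classification of the mutation class of $D_n$ into types $I$--$IV$, the arrangements whose main cycle has length at least $3$ are matched with Type~$IV$ quivers, those whose main cycle is an oriented $2$-cycle are matched with certain Type~$I$ and the Type~$III$ quivers, and those whose main cycle is a loop are matched (after an explicit surgery: doubling an arrow or gluing on a second $3$-cycle) with the remaining Type~$I$ and the Type~$II$ quivers, with an explicit inverse in each case. Without this bijection the equality between $\tilde a(0,n)$ and the size of the mutation class of $D_n$ is unproved, and the degenerate arrangements (loop and $2$-cycle) account for a genuine portion of the count, so they cannot be dismissed as ``the empty configuration''. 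Your treatment of the exceptional case $n=4$ (direct enumeration giving $6$ versus the formula's $10$) agrees with the paper and is fine.
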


The paper is organized as follows.  In Section~\ref{sec:pres} we
collect some basic notions about quiver mutation.  Furthermore, we
present the classification of quivers of type $\Atilde_{n-1}$
according to the parameters $r$ and $s$ mentioned above, as given in
\cite{Bastian2009}.  In Section~\ref{sec:comb-gramm} we restate the
classification as a combinatorial grammar.  Using \lq
generatingfunctionology\rq\ we obtain the formulae for the
assymmetric case where $r_1\neq s_1$ or $r_2\neq s_2$.  For the case
$r_1=s_1$ and $r_2=s_2$ some additional combinatorial considerations,
counting the number of quivers invariant under reflection, yield the
result stated above.

The formulae for $\tilde{a}(r,s)$ are developed in parallel.  In
fact, it is remarkable that the generating function including
variables for the parameters $r_2$ and $s_2$ can be obtained by
\emph{specialising} the much simpler generating function having
variables for the parameters $r$ and $s$ only.  Moreover, extracting
the coefficient of $p^r q^s x^{r_2} y^{s_2}$ in a naive way from the
equations obtained from combinatorial grammars results in a much uglier
five-fold sum, instead of the three-fold sum stated in the main
theorem.

Finally, at the end of Section~\ref{sec:comb-gramm} we prove the
formula for the number of quivers in the mutation class of type
$D_n$, by exhibiting an appropriate bijection between these and a
subclass of the objects counted in Section~\ref{sec:number-quivers}.

\medskip

{\bf Acknowledgements:} We would like to thank Thorsten Holm for
invaluable comments on a preliminary version of this article, and Ira
Gessel for providing the beautiful proof of Lemma~\ref{lem:Gessel}.
We also would like to thank Christian Krattenthaler who gave an
elementary proof of the same lemma, of which at first we only had a
computer assisted proof.

\begin{figure}
	\begin{tabular}{ccc}
		\begin{tabular}{c}
		\includegraphics{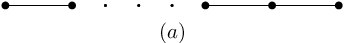}\\
		\includegraphics{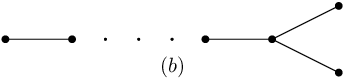}
		\end{tabular}& \hspace{7pt} &
		\begin{tabular}{c} \\ \includegraphics{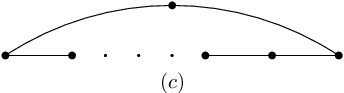} \\ \\ \end{tabular}
	\end{tabular}
	\caption{The Dynkin diagrams of types $A_n$ and $D_n$ and the extended
          Dynkin diagram of type $\Atilde_{n-1}$, assuming that all diagrams have $n$ vertices.}
    \label{fig:Dynkin}
\end{figure}


\section{Preliminaries} \label{sec:pres}

A \Dfn{quiver} $Q$ is a (finite) directed graph where loops and multiple arrows are
allowed. Formally, $Q$ is a quadruple $Q = (Q_0, Q_1, h, t)$ consisting of two
finite sets $Q_0,Q_1$ whose elements are called \Dfn{vertices} and \Dfn{arrows}
resp., and two functions
$$
h : Q_1 \rightarrow Q_0, \quad t : Q_1 \rightarrow Q_0,
$$
assigning a \Dfn{head} $h(\alpha)$ and a \Dfn{tail} $t(\alpha)$ to each arrow
$\alpha \in Q_1$.

Moreover, if $t(\alpha) = i$ and $h(\alpha) = j$ for $i,j \in Q_0$, we say
$\alpha$ is an arrow from $i$ to $j$ and write $i \xrightarrow{\alpha} j$. In
this case, $i$ and $\alpha$ as well as $j$ and $\alpha$ are called
\Dfn{incident} to each other. As usual, two quivers are considered to be equal
if they are isomorphic as directed graphs. The \Dfn{underlying graph} of a
quiver $Q$ is the graph obtained from $Q$ by replacing the arrows in $Q$
by undirected edges.

A quiver $Q^\prime =(Q'_0,Q'_1,h',t')$ is a \Dfn{subquiver} of a quiver $Q =
(Q_0,Q_1,h,t)$ if $Q^\prime_0 \subseteq Q_0$ and $Q^\prime_1 \subseteq Q_1$ and
where $h'(\alpha) = h(\alpha) \in Q'_0, \ t'(\alpha) = t(\alpha) \in Q'_0$ for
any arrow $\alpha \in Q'_1$.  A subquiver is called a \Dfn{full subquiver}
if for any two vertices $i$ and $j$ in the subquiver, the subquiver also
will contain all arrows between $i$ and $j$ present in $Q$.

An \Dfn{oriented cycle} is a subquiver of a quiver whose underlying
graph is a cycle on at least two vertices and whose arrows are all
oriented in the same direction, i.e., every vertex has outdegree $1$.
By contrast, a \Dfn{non-oriented cycle} is a subquiver of a quiver
whose underlying graph is a cycle, but not all of its arrows are
oriented in the same direction.

Throughout the paper, unless explicitly stated, we assume that
\begin{itemize}
\item quivers do not have loops or oriented $2$-cycles, i.e., $h(\alpha) \neq
  t(\alpha)$ for any arrow $\alpha$ and there do not exist arrows
  $\alpha,\beta$ such that $h(\alpha) = t(\beta)$ and $h(\beta) = t(\alpha)$;
\item quivers are connected.
\end{itemize}

\subsection{Quiver mutation}

In \cite{FominZelevinsky2002}, Fomin and Zelevinsky introduced
the \Dfn{quiver mutation} of a quiver $Q$ without loops and oriented $2$-cycles
at a given vertex of $Q$:
\begin{Def}
  Let $Q$ be a quiver. The \Dfn{mutation} of $Q$ at a vertex $k$ is defined to
  be the quiver $Q^* := \mu_k(Q)$ given as follows.
  \begin{enumerate}
  \item Add a new vertex $k^*$.
  \item Suppose that the number of arrows $i \rightarrow k$ in $Q$ equals $a$,
    the number of arrows $k \rightarrow j$ equals $b$ and the number of arrows
    $j \rightarrow i$ equals $c \in \Z$.  Then we have $c-ab$ arrows $j
    \rightarrow i$ in $Q^*$.  Here, a negative number of arrows means arrows in
    the opposite direction.
  \item For any arrow $i \rightarrow k$ (resp.\ $k \rightarrow j$) in $Q$ add an
    arrow $k^* \rightarrow i$ (resp.\ $j \rightarrow k^*$) in $Q^*$.
  \item Remove the vertex $k$ and all its incident arrows.
  \end{enumerate}
  No other arrows are affected by this operation.
\end{Def}

Note that mutation at sinks or sources only means changing the
direction of all incoming and outgoing arrows. Mutation at a vertex
$k$ is an involution on quivers, that is, $\mu_k(\mu_k(Q)) = Q$. It
follows that mutation generates an equivalence relation and we call
two quivers \Dfn{mutation equivalent} if they can be obtained from
each other by a finite sequence of mutations. The \Dfn{mutation
  class} of a quiver $Q$ is the class of all quivers (up to
relabelling of the vertices) which are mutation equivalent to $Q$.

We have the following well-known lemma:
\begin{lem}
  If quivers $Q, Q'$ have the same underlying graph which is a tree, then $Q$
  and $Q'$ are mutation equivalent.
\end{lem}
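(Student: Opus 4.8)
The plan is to reduce the statement to a purely combinatorial fact about orientations of a fixed tree. First I would record the key feature of mutating at a vertex $k$ that is currently a sink or a source: such a mutation simply reverses all arrows incident to $k$, and moreover, when the underlying graph is a tree, it preserves that underlying graph. Indeed, the only step that could alter the underlying graph is the creation of the $c-ab$ arrows between neighbours $i,j$ of $k$ arising from $2$-paths $i\to k\to j$; but a sink has no outgoing and a source no incoming arrows, so one of $a,b$ vanishes and no such arrows appear (and there is no edge $i\to j$ to begin with, since a tree has no cycles). Since a tree also has no multiple edges, no loops or oriented $2$-cycles can arise. Hence every mutation at a sink or source keeps us within the set of orientations of the same tree $T$, and it suffices to show that any two orientations of $T$ are connected by a sequence of mutations at (current) sinks and sources.

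I would prove this by induction on the number $n$ of vertices of $T$, the cases $n\le 2$ being immediate (for a single edge either endpoint is both a sink and a source, so one mutation flips it). For the inductive step, choose a leaf $\ell$ of $T$ with unique neighbour $v$, and set $T'=T\setminus\{\ell\}$. Given orientations $Q,Q'$ of $T$, let $\bar Q,\bar Q'$ denote their restrictions to $T'$. By the induction hypothesis there is a sequence of sink/source mutations transforming $\bar Q$ into $\bar Q'$ inside $T'$.

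The remaining work, and the only genuine obstacle, is to lift this sequence to $T$. A vertex $w\neq v$ has the same neighbours in $T$ and in $T'$, so its sink/source status is unchanged and the corresponding mutation can be performed verbatim in $T$. The single delicate point is a mutation at $v$: the pendant edge $\ell v$, absent in $T'$, may spoil the sink/source property of $v$ in $T$. Since $\ell$ is a leaf it is always a sink or a source, and mutating at $\ell$ flips exactly the edge $\ell v$ and nothing else; so I would first mutate at $\ell$, if necessary, to orient $\ell v$ consistently with the required status of $v$, and only then mutate at $v$. This makes $v$ a genuine sink or source in $T$, and its mutation acts on the edges of $T'$ exactly as the prescribed $T'$-mutation does, the pendant edge being merely flipped as a harmless side effect. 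After running the whole lifted sequence, all edges of $T'$ agree with $Q'$; finally I would adjust the orientation of the pendant edge $\ell v$ by one more mutation at $\ell$ if needed, which touches no edge of $T'$. This brings $Q$ to $Q'$ and completes the induction.
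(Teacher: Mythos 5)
The paper states this lemma as ``well-known'' and gives no proof at all, so there is nothing to compare your argument against; what matters is only whether it stands on its own, and it does. Your two reductions are both sound: mutation at a sink or source merely reverses the incident arrows (one of $a,b$ vanishes in the rule producing $c-ab$ arrows, and a tree has no edge between two neighbours of the mutated vertex), so the underlying tree is preserved; and the induction on a leaf $\ell$ correctly handles the only delicate point, namely that a prescribed mutation at the neighbour $v$ may need to be preceded by a mutation at $\ell$ to restore the sink/source property of $v$ in the larger tree, with the pendant edge fixed by one final mutation at $\ell$. (Only a trivial slip of phrasing: for a single directed edge, one endpoint is a sink and the other a source, not ``both''.) This is essentially the classical sink/source reflection argument for orientations of a tree, written out completely.
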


This lemma implies that one can speak of quivers associated to a
simply-laced Dynkin diagram, i.e., the Dynkin diagram of type $A_n,
D_n$ or $E_n$: we define a \Dfn{quiver of type} $A_n$ (resp.\ $D_n,
E_n$) to be a quiver in the mutation class of all quivers whose
underlying graph is the Dynkin diagram of type $A_n$ (resp.\ $D_n,
E_n$).  We remark that some authors use this term to refer to an
orientation of the Dynkin diagram of type $A_n$ (resp.\ $D_n, E_n$).

One can easily check that the oriented $n$-cycle is also of type
$D_n$, as has been done in \cite[Type IV]{Vatne2008}.  Two
non-oriented $n$-cycles are mutation equivalent if and only if the
number of arrows oriented clockwise coincide, or the number of arrows
oriented clockwise in one cycle agrees with the number of arrows
oriented anti-clockwise in the other cycle. This was shown in
\cite{Bastian2009, FominShapiroThurston2008} and is restated in
Theorem~\ref{thm:description_mutation_classes}.  We call quivers in
those mutation classes \Dfn{quivers of type} $\Atilde_{n-1}$.  They
will be described in more detail in Section~\ref{sec:mutation classes
  of Atilde quivers}.  In Figure~\ref{fig:Dynkin}, the Dynkin
diagrams of types $A_n$ and $D_n$ and the extended Dynkin diagram of
$\Atilde_{n-1}$ are shown.
		
\begin{eg}
	The mutation class of type $\Atilde_3$ of the non-oriented cycles with two arrows in each direction is given by
	
  \begin{center}
  	{\scriptsize
    	\begin{tabular}{c} \includegraphics[scale=0.8]{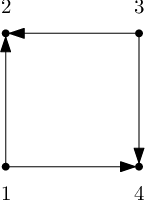} 	\end{tabular} $\xleftrightarrow{\mu_3}$
			\begin{tabular}{c} \includegraphics[scale=0.8]{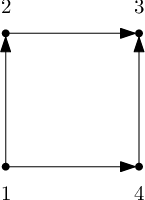} 	\end{tabular} $\xleftrightarrow{\mu_4}$ 
  		\begin{tabular}{c} \includegraphics[scale=0.8]{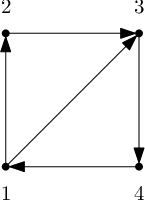} 	\end{tabular} $\xleftrightarrow{\mu_2}$
  		\begin{tabular}{c} \includegraphics[scale=0.8]{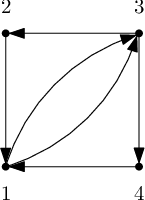}	\end{tabular}.
  	}
  \end{center}
  
  The mutation class of type $\Atilde_3$ of the non-oriented cycle with $3$ arrows in one direction and $1$ arrow in the other is given by
  
	\begin{center}
		{\scriptsize
    	\begin{tabular}{c} \includegraphics[scale=0.8]{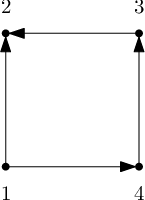} 	\end{tabular} $\xleftrightarrow{\mu_4}$ 
    	\begin{tabular}{c} \includegraphics[scale=0.8]{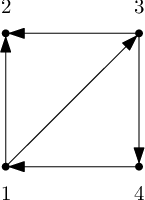} 	\end{tabular} $\xleftrightarrow{\mu_2}$  
    	\begin{tabular}{c} \includegraphics[scale=0.8]{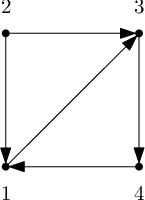} 	\end{tabular} $\xleftrightarrow{\mu_1}$
      \begin{tabular}{c} \includegraphics[scale=0.8]{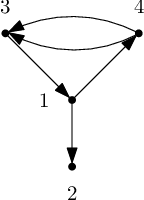} 	\end{tabular} $\xleftrightarrow{\mu_2}$ 
      \begin{tabular}{c} \includegraphics[scale=0.8]{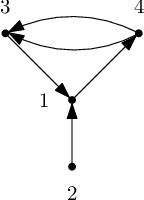}		\end{tabular}.
		}
	\end{center}
\end{eg}

\subsection{Mutation classes of $\Atilde_{n-1}-$quivers}
\label{sec:mutation classes of Atilde quivers}

Following \cite{Bastian2009}, we now describe the mutation classes of quivers
of type $\Atilde_{n-1}$ in more detail:

\begin{Def} \label{dfn:description_mutation_class}
Let $\mQ_{n-1}$ be the class of quivers with $n$ vertices which satisfy the following
conditions:
\begin{enumerate}
\item There exists precisely one full subquiver which is a non-oriented cycle of
length $\geq 2$. Thus, if the length is two, it is a double arrow.
\item For each arrow $x \xrightarrow{\alpha} y$ in this non-oriented cycle, there may
(or may not) be a vertex $z_{\alpha}$ which is not on the non-oriented cycle, such that
there is an oriented $3$-cycle of the form
\begin{center}
\includegraphics[scale=0.9]{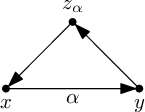}
\end{center}
Apart from the arrows of these oriented $3$-cycles there are no other arrows
incident to vertices on the non-oriented cycle.
\item If we remove all vertices in the non-oriented cycle and their incident
arrows, the result is a disconnected union of quivers, one for each
$z_{\alpha}$.
These are quivers of type $A_{k_\alpha}$ for ${k_\alpha} \geq 1$
(see \cite{BuanVatne2008} for the mutation class of $A_n$), and the
vertices $z_{\alpha}$ have at most two incident arrows in these quivers.
Furthermore, if a vertex $z_{\alpha}$ has two incident arrows in such a quiver,
then $z_{\alpha}$ is a vertex in an oriented $3$-cycle.
We call these quivers \Dfn{rooted quivers of type $A$} with root $z_{\alpha}$.
Note that this is a similar description as for Type IV in~\cite{Vatne2008}.

The rooted quiver of type $A$ with root $z_\alpha$ is called \Dfn{attached}
to the arrow~$\alpha$.
\end{enumerate}
\end{Def}

\begin{rmk}
Our convention is to choose only one of the double arrows to be part of the
oriented $3$-cycle in the following case:
\begin{center}
\includegraphics[scale=0.9]{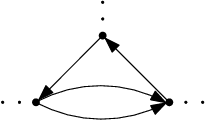}
\end{center}
\end{rmk}

\begin{eg}
  The following quiver is of type $\Atilde_{21}$:
  \begin{center}
    \includegraphics[scale=0.8]{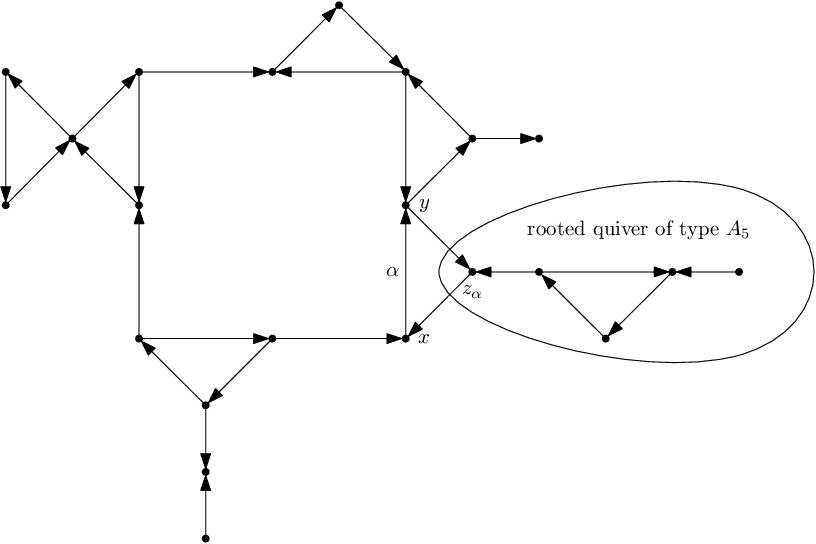}
  \end{center}
\end{eg}

\begin{Def}\label{dfn:realization}
	A \Dfn{realization} of a quiver $Q \in \mQ_{n-1}$ is the quiver
	together with an embedding of the non-oriented cycle into the plane.
	We do not care about a particular embedding of the other arrows,
	i.e., there are at most two different realizations of any given quiver.
    Thus, we can speak of clockwise and anti-clockwise oriented arrows
	in the non-oriented cycle.
\end{Def}

We will see in Section~\ref{sec:comb-gramm} that it is straightforward to count
the number of possible realizations of quivers in a mutation class of
$\Atilde_{n-1}$.  Since the two realizations of a quiver may coincide, we will need
an additional argument to count the number of quivers themselves.

As in \cite{Bastian2009} we can define parameters $r_1, \ r_2, \ s_1$ and $s_2$
for a realization of a quiver $Q \in \mQ_{n-1}$ as follows:

\begin{Def}\label{dfn:parameters}
  Let $Q$ be a quiver in $\mQ_{n-1}$ and fix a realization of $Q$.  The arrows in
  $Q$ which are part of the non-oriented cycle are called \Dfn{base arrows}.
  Let $r_1$ be the number of arrows which are not part of any oriented
  $3$-cycle and which are either
  \begin{enumerate}
  \item base arrows and oriented anti-clockwise, or
  \item contained in a rooted quiver of type $A$ attached to a base arrow
    $\alpha$ which is oriented anti-clockwise.
  \end{enumerate}

  \begin{minipage}{0.95\linewidth}
    \hfill
    \begin{minipage}{0.05\linewidth}
      (1)
    \end{minipage}
    \hfill
    \begin{minipage}{0.4\linewidth}
      \begin{center}
        \includegraphics[scale=0.8]{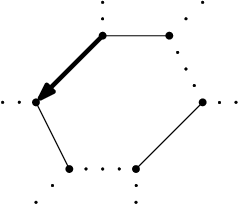}
      \end{center}
    \end{minipage}
    \hfill
    \begin{minipage}{0.05\linewidth}
      (2)
    \end{minipage}
    \hfill
    \begin{minipage}{0.4\linewidth}
      \begin{center}
        \includegraphics[scale=0.8]{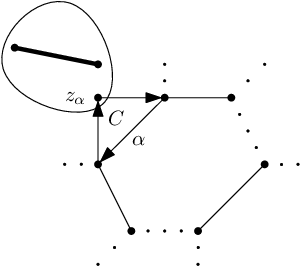}
      \end{center}
    \end{minipage}
  \end{minipage}

  \vspace{5pt}

  Let $r_2$ be the number of oriented $3$-cycles
  \begin{enumerate}
  \item which share an arrow $\alpha$ with the non-oriented cycle
  and $\alpha$ (a base arrow) is oriented anti-clockwise, or
  \item which are contained in a rooted quiver of type $A$ attached to a base arrow
    $\alpha$ which is oriented anti-clockwise.
  \end{enumerate}

  \vspace{5pt}

  \begin{minipage}{0.95\linewidth}
    \hfill
    \begin{minipage}{0.05\linewidth}
      (1)
    \end{minipage}
    \hfill
    \begin{minipage}{0.4\linewidth}
      \begin{center}
        \includegraphics[scale=0.8]{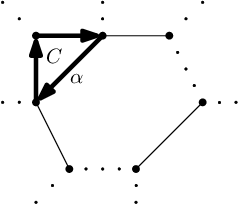}
      \end{center}
    \end{minipage}
    \hfill
    \begin{minipage}{0.05\linewidth}
      (2)
    \end{minipage}
    \hfill
    \begin{minipage}{0.4\linewidth}
      \begin{center}
        \includegraphics[scale=0.8]{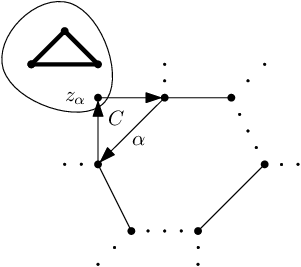}
      \end{center}
    \end{minipage}
  \end{minipage}

  \vspace{10pt}

  Similarly we define the parameters $s_1$ and $s_2$ with \lq
  anti-clockwise\rq\ replaced by \lq clockwise\rq.
\end{Def}

\begin{eg}\label{eg:quivers}
  We indicate the arrows which count for the parameter $r_1$
  by \includegraphics[scale=0.9]{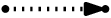} and the arrows which count for $s_1$
  by \includegraphics[scale=0.9]{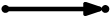}. Furthermore, the oriented $3$-cycles
  counting for $r_2$ are indicated by \includegraphics[scale=1]{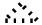}
  and the oriented $3$-cycles counting for $s_2$ are indicated by
  \includegraphics[scale=1]{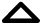}.
  
  \bigskip

  Consider the following realization of a quiver in $\mQ_{16}$:
    \begin{center}
      \includegraphics[scale=0.9]{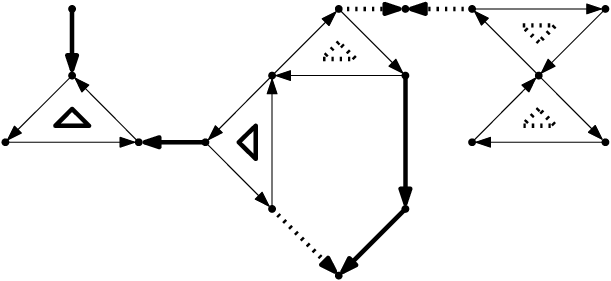}
    \end{center}
    Here, we have $r_1=3, \ r_2=3, \ s_1=4$ and $s_2=2$.
\end{eg}

In \cite{Bastian2009} an explicit description of the mutation classes of
quivers of type $\Atilde_{n-1}$ and, moreover, the derived equivalence
classes of cluster-tilted algebras of type $\Atilde_{n-1}$ is given as follows:

\begin{thm}\cite[Theorem 3.12, Theorem 5.5]{Bastian2009} \label{thm:description_mutation_classes} 
  Let $Q_1\in \mQ_{n-1}$ be a quiver with a realization having
  parameters $r_1$, $r_2$, $s_1$ and $s_2$ such that $r_1<s_1$ or
  $r_1=s_1$ and $r_2\leq s_2$.  Similarly, let $Q_2\in \mQ_{n-1}$ be
  a quiver with a realization having parameters $\tilde{r}_1$,
  $\tilde{r}_2$, $\tilde{s}_1$ and $\tilde{s}_2$ such that $\tilde
  r_1<\tilde s_1$ or $\tilde r_1=\tilde s_1$ and $\tilde
  r_2\leq\tilde s_2$.  Then $Q_1$ is mutation equivalent to $Q_2$ if
  and only if $r_1+2r_2 = \tilde{r}_1 + 2\tilde{r}_2$ and $s_1+2s_2 =
  \tilde{s}_1 + 2\tilde{s}_2$.

  
  Moreover, two cluster-tilted algebras of type $\Atilde_{n-1}$ are
  derived equivalent if and only if their quivers have realizations
  with the same parameters $r_1,$ $r_2,$ $s_1$ and $s_2$.
\end{thm}


\section{A Combinatorial Grammar}
\label{sec:comb-gramm}

In this section we describe the elements of the mutation classes of type
$\Atilde_{n-1}$ by a combinatorial grammar.  This can be viewed as an exercise in
the theory of species (introduced by Joyal, see the book~\cite{BLL} by
Bergeron, Labelle and Leroux) or the symbolic
method (as detailed in the recent book~\cite{FS} by Flajolet and Sedgewick).
We first give a recursive description of \emph{rooted quivers of type $A$}
as defined in \ref{dfn:description_mutation_class}.
A quiver of type $\Atilde_{n-1}$ will then be roughly a cycle of rooted quivers
of type $A$.

\subsection{A recursive description of rooted quivers of type $A$}
\label{sec:rooted-quivers-type}

Let $\Apoint$ be the set of all rooted quivers of type $A$.  We can then
describe the elements of $\Apoint$ recursively. A rooted quiver of
type $A$ is one of the following:
\begin{itemize}
\item the root;
\item the root, incident to an arrow, and a rooted quiver of type $A$ incident
  to the other end of the arrow.  The arrow may be directed either way.
\item the root, incident to an oriented $3$-cycle, and two rooted quivers of
  type $A$, each being incident to one of the other two vertices of the
  $3$-cycle.
\end{itemize}
We obtain the following combinatorial grammar:
\begin{center}
  \includegraphics[scale=1]{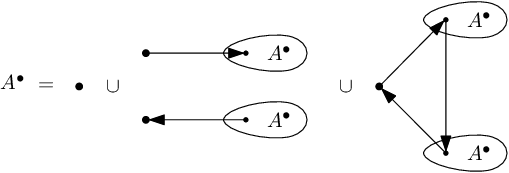}
\end{center}

We set the \Dfn{weight} of an arrow which is not part of an oriented
$3$-cycle equal to $z$ and the weight of an oriented $3$-cycle equal
to $tz^2$.  Hence, the weight of a rooted quiver $Q$ of type $A$ is
$z^{\#\{\text{vertices in }Q\}-1}t^{\#\{\text{oriented $3$-cycles in
  }Q\}}$.  This choice of weight is in accordance with the first part
of Theorem~\ref{thm:description_mutation_classes} where we count
oriented $3$-cycles in quivers (the number of which we denoted $r_2$,
resp.\ $s_2$) twice.

Thus, let
$$
\Apoint(z,t)=\sum_{Q\in\Apoint} z^{\#\{\text{vertices in $Q$}\}-1} t^{\#\{\text{oriented $3$-cycles in }Q\}}
$$ 
be the generating function (in particular: the formal power series) associated
to rooted quivers of type $A$.  From the recursive description, we obtain
\begin{gather*}
  \Apoint(z,t) = 1+2z\Apoint(z,t)+z^2t\Apoint(z,t)^2,%
  \intertext{or equivalently}%
  z^2t\Apoint(z,t)^2 + (2z-1)\Apoint(z,t)+ 1=0.
\end{gather*}
Solving this quadratic equation for $\Apoint(z,t)$ and choosing the
branch corresponding to a generating function gives
\begin{equation*}
\Apoint(z,t) = \frac{1-2z-\sqrt{1-4(z+(t-1)z^2)}}{2z^2t}.
\end{equation*}
We remark that for $t=1$ this is the generating function for the
\emph{Catalan numbers} shifted by $1$,
\begin{align*}
  \Apoint(z,1)& = \frac{1-2z-\sqrt{1-4z}}{2z^2}\\
  & = \sum_{n \geq 1} \frac{1}{n+1}\binom{2n}{n} z^{n-1},
\end{align*}
see e.g.~\cite[Section 3.0 Eq.~(3)]{BLL}.

To give a combinatorial description of the realizations of quivers in the
mutation classes of type $\Atilde_{n-1}$ corresponding to
Definition~\ref{dfn:parameters} we need auxiliary objects, which are one of the
following:
\begin{enumerate}
\item\label{enum:B1} a single (base) arrow, oriented from left to right, or
\item a rooted quiver of type $A$ attached to an oriented $3$-cycle, whose base
  arrow (see Definition~\ref{dfn:parameters}) is oriented from left to right,
  or
\item a single (base) arrow, oriented from right to left, or
\item\label{enum:B4} a rooted quiver of type $A$ attached to an oriented $3$-cycle, whose base
  arrow is oriented from right to left.
\end{enumerate}
\begin{rmk}
  The \lq base arrows\rq\ in \eqref{enum:B1}--\eqref{enum:B4} above will become
  precisely the arrows of the non-oriented cycle, which justifies the usage of
  the name.
\end{rmk}
Thus, we again obtain a combinatorial grammar:
\begin{center}
  \includegraphics[scale=1]{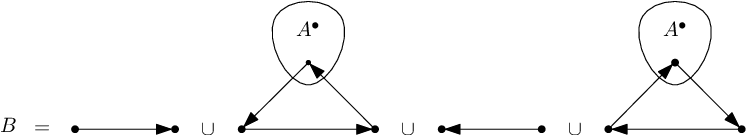}
\end{center}
\bigskip

The \Dfn{weight} of an object $Q \in B$ is $p^{\#\{\text{vertices in }Q\}-1}x^{\#\{\text{oriented $3$-cycles in }Q\}}$ if it is of
type (1) or (2), and $q^{\#\{\text{vertices in }Q\}-1}y^{\#\{\text{oriented $3$-cycles in }Q\}}$ if it is of type (3) or
(4). In particular, the weight of $Q$ depends only on the orientation of the
base arrow and on the total number of vertices and $3$-cycles of $Q$. Passing to generating
functions, we obtain
\begin{align}
  B(p,q,x,y)  &= p+p^2x\Apoint(p,x)+q+q^2y\Apoint(q,y) \nonumber \\
          &= \frac{1- \sqrt{1-4\big(p+(x-1)p^2\big)}}{2} + \frac{1- \sqrt{1-4\big(q+(y-1)q^2\big)}}{2} \nonumber \\
          &= \big(p+(x-1)p^2\big) C\big(p+(x-1)p^2\big) + \big(q+(y-1)q^2\big) C\big(q+(y-1)q^2\big), \nonumber 
\end{align}
where $C(z)$ is the generating function for the \Dfn{Catalan
  numbers},
$$
C(z) = \frac{1- \sqrt{1-4z}}{2z} = \sum_{n \geq 0} \frac{1}{n+1}\binom{2n}{n}
z^n.
$$
Note that
$$
B(p,q,x,y)=B\big(p+(x-1)p^2,q+(y-1)q^2,1,1\big)\;.
$$

\subsection{The number of quivers of type $\Atilde_{n-1}$}
\label{sec:number-quivers}

In this section we will first determine the number of realizations of
quivers of type $\Atilde_{n-1}$, as defined in
Definition~\ref{dfn:realization}.  This already suffices to determine
the number of quivers with parameters $r_1$, $r_2$, $s_1$, $s_2$ such
that $r_1<s_1$ or $r_1=s_1$ and $r_2<s_2$, see Corollary~\ref{cor:
  non-symm_case}.  We then count quivers with $r_1=s_1$ and $r_2=s_2$
that are \Dfn{symmetric}, i.e., whose two realizations coincide, to
determine the number of quivers in the general case as stated in
Corollary~\ref{cor:number-quivers-general}.

By Definition~\ref{dfn:description_mutation_class}, a realization of a quiver
of type $\Atilde_{n-1}$ is simply a cyclic arrangement of elements in $B$
with a total of $n$ vertices.  For example, the quiver in
Example~\ref{eg:quivers} consists of five elements of $B$, three of which
are just arrows, the two others are rooted quivers of type $A$ attached to an
oriented $3$-cycle.

The following Lemma is the so called \Dfn{cycle construction}, which is well
known in combinatorics, see eg.\ \cite[Eq.~(18), Section~1.4]{BLL} or
\cite[Theorem I.1, Section I.2.2]{FS}.
\begin{lem}
  Let $B(z)$ be the generating function for a family of unlabelled objects,
  where $z$ marks size.  Then the generating function for cycles of such
  objects is
  $$\sum_{k\geq 1}\frac{\phi(k)}{k}\log\left(\frac{1}{1-B(z^k)}\right),$$
  where $\phi(k)$ is Euler's totient function, i.e., the number of $1\leq d<k$
  coprime to $k$.
\end{lem}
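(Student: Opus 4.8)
The plan is to prove the \emph{cycle construction} formula, which counts unlabelled cyclic arrangements of objects drawn from a family whose ordinary generating function is $B(z)$. First I would set up the bookkeeping: a cycle of length $k$ is a sequence of $k$ objects $(b_1,\dots,b_k)$ considered up to cyclic rotation. The cyclic group $\Z/k\Z$ acts on the set of such sequences, and the orbits are exactly the unlabelled cycles we want to count. So the natural tool is Burnside's lemma (the Cauchy--Frobenius orbit-counting formula) applied to this action, with the generating-function refinement that keeps track of total size.

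The key computational step is to analyze, for each rotation by $d$ positions, which sequences are fixed. A sequence $(b_1,\dots,b_k)$ is invariant under rotation by $d$ precisely when it is periodic with period $\gcd(k,d)$. Writing $g=\gcd(k,d)$, such a fixed sequence is determined by its first $g$ entries, and its total size is $k/g$ times the size of one period; equivalently, a $g$-periodic sequence of length $k$ corresponds bijectively to an arbitrary sequence of $k/g$ objects each reweighted by $z\mapsto z^{k/g}$. Hence the size-weighted count of sequences of length $k$ fixed by the rotation with $\gcd=g$ equals $B(z^{k/g})^g$. Summing over all $k$ rotations and dividing by $k$ (the Burnside average) and then summing over all cycle lengths $k\ge1$ gives
\[
\sum_{k\ge1}\frac1k\sum_{d=0}^{k-1}B\bigl(z^{k/\gcd(k,d)}\bigr)^{\gcd(k,d)}.
\]
I would then reindex this double sum by grouping the $d$ with a common value of $g=\gcd(k,d)$. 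The number of $0\le d<k$ with $\gcd(k,d)=g$ is $\phi(k/g)$, so substituting $k=g\ell$ turns the inner sum into $\sum_{g\mid k}\phi(k/g)\,B(z^{k/g})^g$, and after collecting this over all $k$ the contribution of each pair $(m,g)$ with $z^m$-substitution reorganizes into $\sum_{g\ge1}\frac1g\sum_{m\ge1}\frac{\phi(m)}{m}B(z^m)^g$.

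The final step is the summation identity that produces the logarithm: swapping the order of summation and using $\sum_{g\ge1}\frac{u^g}{g}=\log\frac1{1-u}$ with $u=B(z^m)$ collapses the inner sum and yields exactly $\sum_{k\ge1}\frac{\phi(k)}{k}\log\!\bigl(\frac1{1-B(z^k)}\bigr)$, renaming the index $m$ back to $k$. The main obstacle is the reindexing in the middle step: one must correctly track how the factor $1/k$ from Burnside, the count $\phi(k/g)$ of rotations with a given gcd, and the substitution $z\mapsto z^{k/g}$ interact, so that the two independent summation variables (the period-substitution index and the exponent index) separate cleanly. Once that combinatorial double-counting is disentangled, the appearance of $\phi(k)/k$ and the logarithm is forced by the two standard power-series identities above, and no delicate analytic convergence issues arise because everything is a formal power series in $z$ (each coefficient receives only finitely many contributions, since $B(0)=0$ means $B(z^k)$ contributes only in degrees $\ge k$).
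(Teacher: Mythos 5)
Your proof is correct, but note that the paper does not actually prove this lemma: it is stated as the well-known \emph{cycle construction} and justified only by citations to \cite[Eq.~(18), Section~1.4]{BLL} and \cite[Theorem I.1, Section I.2.2]{FS}. Your Burnside (Cauchy--Frobenius) argument is essentially the standard proof found in those references, and every step checks out: a length-$k$ sequence fixed by rotation through $d$ is determined by its first $g=\gcd(k,d)$ entries repeated $k/g$ times, so its size-weighted count is $B(z^{k/g})^g$; the number of $0\le d<k$ with $\gcd(k,d)=g$ is $\phi(k/g)$; and the substitution $k=gm$ separates the sum into $\sum_{m\ge1}\frac{\phi(m)}{m}\sum_{g\ge1}\frac{1}{g}B(z^m)^g$, which the logarithm series collapses to the stated formula. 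Your closing remark that $B(0)=0$ is needed for the formal-power-series manipulations (so that $\frac{1}{1-B(z^k)}$ is well defined and each coefficient receives finitely many contributions) is the right caveat; this hypothesis is implicit in the lemma and is satisfied by the series $B(p,q,x,y)$ to which the paper applies it. The only cosmetic point worth flagging is that the lemma's parenthetical definition of $\phi$ would give $\phi(1)=0$, whereas the formula (and your proof) requires the usual convention $\phi(1)=1$.
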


Thus, we obtain for the generating function for realizations of quivers of type
$\Atilde_{n-1}$ with $p$ marking $r_1+2r_2$, $q$ marking $s_1+2s_2$, $x$ marking $r_2$
and $y$ marking $s_2$
$$
\Atilde(p, q, x, y)=\sum_{k\geq 1}
\frac{\phi(k)}{k}\log\left(\frac{1}{1-B(p^k,q^k,x^k,y^k)}\right).
$$

Let us first determine the coefficients in the special case of $\log\frac{1}{1-B(p,q,1,1)}$. 

\begin{lem}\label{lem:Gessel}
  For $(r,s)\neq(0,0)$ we have
  $$
  [p^rq^s]\log\left(\frac{1}{1-B(p,q,1,1)}\right)=
  \frac{1}{2(r+s)}\binom{2r}{r}\binom{2s}{s},
  $$
  where $[p^rq^s]G(p,q)$ denotes the coefficient of $p^rq^s$ in the
  formal power series $G(p,q)$.
\end{lem}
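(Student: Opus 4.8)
The plan is to first simplify the argument of the logarithm. Setting $x=y=1$ in the closed form for $B$ gives $B(p,q,1,1)=f(p)+f(q)$ with $f(z)=\tfrac{1-\sqrt{1-4z}}{2}$, so that
\[
1-B(p,q,1,1)=1-\frac{1-\sqrt{1-4p}}{2}-\frac{1-\sqrt{1-4q}}{2}=\frac{\sqrt{1-4p}+\sqrt{1-4q}}{2}.
\]
Hence, writing $u=\sqrt{1-4p}$ and $v=\sqrt{1-4q}$, the quantity to be analysed is
\[
L(p,q):=\log\frac{1}{1-B(p,q,1,1)}=\log 2-\log(u+v).
\]

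The crucial idea is to remove the factor $\tfrac{1}{r+s}$ by applying the Euler operator $\theta:=p\partial_p+q\partial_q$, which acts on monomials by $\theta\,p^rq^s=(r+s)\,p^rq^s$; thus $[p^rq^s]\theta L=(r+s)[p^rq^s]L$ for all $(r,s)$. I would then compute $\theta L$ directly. Using $\partial_p u=-2/u$ together with $p=\tfrac{1-u^2}{4}$ (and symmetrically in $q$), the two summands of $\theta L$ can be written over the common denominator $2uv(u+v)$, whose numerator factors as $(u+v)(1-uv)$. The factor $u+v$ cancels, leaving the remarkably simple expression
\[
\theta L=\frac{1-uv}{2uv}=\frac{1}{2}\left(\frac{1}{\sqrt{1-4p}\,\sqrt{1-4q}}-1\right).
\]

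To finish I would invoke the classical expansion $\tfrac{1}{\sqrt{1-4z}}=\sum_{n\geq 0}\binom{2n}{n}z^n$, which yields
\[
\theta L=\frac{1}{2}\sum_{r,s\geq 0}\binom{2r}{r}\binom{2s}{s}p^rq^s-\frac{1}{2},
\]
where the subtracted constant exactly cancels the $(r,s)=(0,0)$ term. Reading off the coefficient of $p^rq^s$ for $(r,s)\neq(0,0)$ gives $(r+s)[p^rq^s]L=\tfrac12\binom{2r}{r}\binom{2s}{s}$, and dividing by $r+s$ produces the claim; note that this argument covers the boundary cases $r=0$ or $s=0$ uniformly, since there $r+s\neq 0$ still.

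The routine parts are the differentiation and the binomial expansion. I expect the main obstacle to be the middle step: guessing that $\theta L$ collapses to something this clean is not obvious a priori, and checking that the numerator factors as $(u+v)(1-uv)$ so that the denominator's $u+v$ cancels is the one genuinely delicate algebraic manipulation. The change of variables to $u=\sqrt{1-4p}$ and $v=\sqrt{1-4q}$ is precisely what makes that factorisation visible, and is the step I would identify first.
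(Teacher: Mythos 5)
Your proof is correct and is essentially the paper's own argument: the paper likewise removes the $\tfrac{1}{r+s}$ factor by differentiating (it writes $t\frac{d}{dt}$ applied to $\log\frac{1}{1-B(tp,tq,1,1)}$, which at $t=1$ is exactly your Euler operator $p\partial_p+q\partial_q$) and arrives at the same identity $1+2\theta L=\frac{1}{\sqrt{1-4p}\sqrt{1-4q}}$. You merely spell out the ``direct calculation'' the paper leaves implicit, including the cancellation of the factor $u+v$.
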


\begin{proof}
A direct calculation gives
\begin{multline}\label{eq:1}
  1+2t\frac d{dt}\log\left(\frac1{1-B(tp,tq,1,1)}\right)\\
  \begin{aligned}
    &=1+\frac{2t}{\sqrt{1-4tp}+\sqrt{1-4tq}}
    \left(\frac{2p}{\sqrt{1-4tp}}+\frac{2q}{\sqrt{1-4tq}}
    \right)\\
    &=\frac 1{\sqrt{1-4tp}+\sqrt{1-4tq}}
    \left(\sqrt{1-4tp}+\frac{4tp}{\sqrt{1-4tp}}+\sqrt{1-4tq}+\frac{4tq}{\sqrt{1-4tq}}
    \right)\\
    &=\frac 1{\sqrt{1-4tp}+\sqrt{1-4tq}}
    \left(\frac{1}{\sqrt{1-4tp}}+\frac{1}{\sqrt{1-4tq}}
    \right)\\
    &=\frac1{\sqrt{1-4tp}}\cdot\frac1{\sqrt{1-4tq}}\\
    &=\sum_{r,s\geq0}\binom{2r}r\binom{2s}sp^rq^st^{r+s}\;.
  \end{aligned}
\end{multline}
Denoting $a_{r,s}=[p^rq^s]\log\left(\frac{1}{1-B(p,q,1,1)}\right)$ we
have
\begin{align}\label{eq:2}
1+2t\frac d{dt}\log\left(\frac1{1-B(tp,tq,1,1)}\right)%
&=1+2t\frac d{dt}\sum_{r,s\geq0}a_{r,s}p^rq^st^{r+s}\\\notag%
&=1+2\sum_{r,s\geq0}(r+s)a_{r,s}p^rq^st^{r+s}.
\end{align}
We now obtain $a_{r,s}$ by equating coefficients on the right hand
sides of Equation~\eqref{eq:1} and Equation~\eqref{eq:2}.
\end{proof}

We can now determine the coefficients of $\log\frac{1}{1-B(p,q,x,y)}$.

\begin{lem}\label{lem:Thomas}
\begin{multline*}
  [p^rq^sx^{r_2}y^{s_2}]\log\left(\frac{1}{1-B(p,q,x,y)}\right)\\
  =(-1)^{r+r_2+s+s_2}%
  \sum_{\substack{i,j\geq0\\(i,j)\neq(0,0)}}%
  \frac{(-1)^{i+j}}{2(i+j)}%
  \binom{2i}{i,2i-r,r_2,r-r_2-i}\binom{2j}{j,2j-s,s_2,s-s_2-j},
\end{multline*}
where 
$[p^rq^sx^{r_2}y^{s_2}]B(p,q,x,y)$ denotes the coefficient of
  $p^rq^sx^{r_2}y^{s_2}$ in the formal power series $B(p,q,x,y)$.
\end{lem}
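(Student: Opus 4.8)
The plan is to reduce everything to Lemma~\ref{lem:Gessel} by exploiting the substitution identity $B(p,q,x,y)=B\bigl(p+(x-1)p^2,\,q+(y-1)q^2,\,1,1\bigr)$ recorded at the end of Section~\ref{sec:rooted-quivers-type}. Writing $\tilde p = p+(x-1)p^2 = p\bigl(1+(x-1)p\bigr)$ and $\tilde q = q+(y-1)q^2 = q\bigl(1+(y-1)q\bigr)$, this identity gives
$$\log\Bigl(\frac{1}{1-B(p,q,x,y)}\Bigr)=\log\Bigl(\frac{1}{1-B(\tilde p,\tilde q,1,1)}\Bigr),$$
so by Lemma~\ref{lem:Gessel} the right-hand side expands as
$$\sum_{\substack{a,b\ge 0\\(a,b)\neq(0,0)}}\frac{1}{2(a+b)}\binom{2a}{a}\binom{2b}{b}\,\tilde p^{\,a}\tilde q^{\,b}.$$
First I would substitute the expressions for $\tilde p$ and $\tilde q$ and extract the coefficient of $p^rq^sx^{r_2}y^{s_2}$. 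Since $\tilde p$ depends only on $p,x$ and $\tilde q$ only on $q,y$, the extraction factors as a product $[p^rx^{r_2}]\tilde p^{\,a}\cdot[q^sy^{s_2}]\tilde q^{\,b}$, leaving the prefactor $\tfrac{1}{2(a+b)}\binom{2a}{a}\binom{2b}{b}$ untouched.

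Next I would compute the single-variable extractions by the binomial theorem. From $\tilde p^{\,a}=p^{\,a}\bigl(1+(x-1)p\bigr)^a$ one obtains
$$[p^rx^{r_2}]\tilde p^{\,a}=\binom{a}{r-a}\,[x^{r_2}](x-1)^{r-a}=(-1)^{\,r-a-r_2}\binom{a}{r-a}\binom{r-a}{r_2},$$
and symmetrically for $[q^sy^{s_2}]\tilde q^{\,b}$, with the convention that binomial coefficients with negative or out-of-range entries vanish (this automatically restricts $a$ to $r/2\le a\le r$ with $r_2\le r-a$, and likewise for $b$).

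The main remaining step is pure bookkeeping: identifying the resulting double sum over $a,b$ with the claimed one over $i,j$ under $i=a$, $j=b$. Two things must be checked. First, the triple product of binomials collapses to the stated multinomial,
$$\binom{2a}{a}\binom{a}{r-a}\binom{r-a}{r_2}=\binom{2a}{a,\,2a-r,\,r_2,\,r-r_2-a},$$
which follows by writing out factorials and using $r-a-r_2=r-r_2-a$; the analogous identity holds for the $b$-factors. Second, the signs must match: the extraction contributes $(-1)^{(r-a-r_2)+(s-b-s_2)}$, while the target has $(-1)^{r+r_2+s+s_2}(-1)^{a+b}$, and these agree because their exponents differ by the even number $2a+2b+2r_2+2s_2$. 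The only subtlety is the edge terms $a=0$ or $b=0$ (allowed, since we exclude only $(a,b)=(0,0)$): when $a=0$ the factor $[p^rx^{r_2}]1$ is nonzero only for $r=r_2=0$, which is exactly when the multinomial $\binom{0}{0,\,-r,\,r_2,\,r-r_2}$ fails to vanish, so the two index sets genuinely coincide. I do not expect any deeper obstacle: once the substitution reduces the problem to Lemma~\ref{lem:Gessel}, the proof is a term-by-term matching, and the only real care is in the ranges, the multinomial collapse, and the sign.
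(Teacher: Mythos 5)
Your proposal is correct and follows essentially the same route as the paper: both apply the substitution $B(p,q,x,y)=B(p+(x-1)p^2,q+(y-1)q^2,1,1)$ to reduce to Lemma~\ref{lem:Gessel}, expand $p^i(1+(x-1)p)^i$ (and its $q$-counterpart) by the binomial theorem, and collapse the resulting product of binomials into the stated multinomial while matching signs. The sign and edge-case checks you carry out are accurate, so nothing further is needed.
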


\begin{proof}
From Lemma \ref{lem:Gessel} and the substitution $B(p,q,x,y)=B(p+(x-1)p^2,q+(y-1)q^2,1,1)$ 
it follows that
$$
\log\left(\frac1{1-B(p,q,x,y)}\right)=
\sum_{\substack{i,j\geq0\\(i,j)\neq(0,0)}}\frac1{2(i+j)}{2i\choose i}{2j\choose j}p^i(1+(x-1)p)^iq^j(1+(y-1)q)^j\;.
$$
A simple expansion gives now
\begin{multline*}
  \log\left(\frac1{1-B(p,q,x,y)}\right)\\
  \shoveright{%
    \begin{aligned}
      &=\sum_{\substack{i,j\geq0\\(i,j)\neq(0,0)}}\sum_{k,l,r_2,s_2\geq0}%
      p^{i+k}q^{j+l}x^{r_2}y^{s_2}\frac{(-1)^{k+r_2+l+s_2}}{2(i+j)}%
      {2i\choose i}{2j\choose j}%
      {i\choose k}{j\choose l}{k\choose r_2}{l\choose s_2}\\
      &=\sum_{r,s,r_2,s_2\geq0}p^rq^sx^{r_2}y^{s_2}\\
    \end{aligned}}\\
  \sum_{\substack{i,j\geq0\\(i,j)\neq(0,0)}}%
  \frac{(-1)^{r+s+r_2+s_2+i+j}}{2(i+j)}%
  {2i\choose i}{2j\choose j}%
  {i\choose r-i}{j\choose s-j}{r-i\choose r_2}{s-j\choose s_2},
\end{multline*}
from which one reads off the desired result.
\end{proof}
      
Putting the pieces together we obtain:

\begin{thm} \label{th:realization of quivers} The number of
  realizations of quivers of type $\Atilde_{r+s-1}$ with parameters
  $r>0$ and $s>0$ is given by
  \begin{equation}
    \label{eq:realizations-r-s}
    \frac{1}{2}
    \sum_{k|r,k|s}\frac{\phi(k)}{r+s}\binom{2r/k}{r/k}\binom{2s/k}{s/k}.
  \end{equation}
  The number of realizations of quivers of type $\Atilde_{r+s-1}$
  with parameters $r_1, r_2, s_1, s_2$ such that $r=r_1+2r_2>0$ and
  $s=s_1+2s_2>0$ is given by
  \begin{multline} 
    \label{eq:realizations-r1-s1-r2-s2}
    \sum_{k|r,k|r_2,k|s,k|s_2}\frac{\phi(k)}k(-1)^{(r+r_2+s+s_2)/k}\\
    \sum_{\substack{i,j\geq0\\(i,j)\neq(0,0)}}\frac{(-1)^{i+j}}{2(i+j)}
    \binom{2i}{i,2i-r/k,r_2/k,(r-r_2)/k-i}\binom{2j}{j,2j-s/k,s_2/k,(s-s_2)/k-j}.
  \end{multline}
\end{thm}
\begin{proof}
  Observe that for any $F(p,q)=\sum_{r,s} f_{r,s} p^r q^s$ we have
  $$
  [p^r q^s]F(p^k,q^k)=
  \begin{cases}
    f_{r/k,s/k}&\text{when $k|r$ and $k|s$,}\\
    0          &\text{otherwise}.
  \end{cases}
  $$
  Using Lemma~\ref{lem:Gessel} we get
  \begin{align*}
    [p^r q^s]\sum_{k\geq 1}\frac{\phi(k)}{k}
                           \log\left(\frac{1}{1-B(p^k,q^k,1,1)}\right)
    &=\sum_{k\geq 1}\frac{\phi(k)}{k}
                    [p^r q^s]\log\left(\frac{1}{1-B(p^k,q^k,1,1)}\right)\\
    &=\sum_{k|r, k|s}\frac{\phi(k)}{k}
                    \frac{k}{2(r+s)}\binom{2r/k}{r/k}\binom{2s/k}{s/k}.
  \end{align*}

  The general formula follows similarly from Lemma~\ref{lem:Thomas}.
\end{proof}

As a corollary we obtain the number of quivers of type
$\Atilde_{r+s-1}$ with parameters that do not coincide:
\begin{cor}\label{cor: non-symm_case}
  For $r<s$, the number $\tilde a(r,s)$ of quivers of type
  $\Atilde_{r+s-1}$ with parameters $r$ and $s$ is given by
  Formula~\eqref{eq:realizations-r-s}.  For $r_1<s_1$ or $r_1=s_1$
  and $r_2<s_2$, the number of quivers with parameters $r_1$, $r_2$,
  $s_1$ and $s_2$ is given by
  Formula~\eqref{eq:realizations-r1-s1-r2-s2}.
\end{cor}

\begin{proof}
  If $r_1<s_1$ or $r_1=s_1$ and $r_2<s_2$, a quiver has a unique
  realization with these parameters.  Therefore, the claim follows
  directly from Theorem~\ref{th:realization of quivers}.
\end{proof}

We have seen that a quiver of type $\Atilde_{2r-1}$ is a non-oriented cycle of
elements in $B$ with a total number of $2r$ vertices.  To count quivers of type
$\Atilde_{2r-1}$, we first have to consider \Dfn{symmetric quivers} of type
$\Atilde_{2r-1}$, i.e., quivers where both possible realizations coincide.  To
do so, we have to count lists of elements in $B$:
\begin{lem}
  The number of lists $(B_1, \dots, B_\ell)$ of elements in $B$ with
  a total of $r+\ell$ vertices is given by the central binomial
  coefficient $\binom{2r}{r}$.  The number of such lists with $r_2$
  oriented $3$-cycles is given by
  \begin{equation}\label{eq:lists}
    2^{r-2r_2}\binom{r}{r_2,r_2,r-2r_2}=2^{r-2r_2}\binom{r}{2r_2}\binom{2r_2}{r_2}.
  \end{equation}
\end{lem}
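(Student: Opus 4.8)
The plan is to read off both counts as coefficients in the generating function for sequences (lists) of elements of $B$. Since every element of $B$ has at least two vertices, the series $B(p,q,x,y)$ has no constant term, so the class of lists $(B_1,\dots,B_\ell)$ is the sequence construction $\operatorname{SEQ}(B)$ with generating function $\frac{1}{1-B(p,q,x,y)}$. An element with $v$ vertices carries the exponent $v-1$ in $p$ or $q$, so a list of length $\ell$ with a total of $r+\ell$ vertices is exactly a list whose combined degree in $p$ and $q$ equals $r$; moreover $x$ and $y$ together record the total number of oriented $3$-cycles. Hence it suffices to set $p=q=z$ and $x=y=t$ and to extract coefficients from
\[
G(z,t):=\frac{1}{1-B(z,z,t,t)}=\bigl(1-4(z+(t-1)z^2)\bigr)^{-1/2},
\]
where the last equality uses the explicit form of $B$ computed in Section~\ref{sec:rooted-quivers-type}. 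For the first assertion I would simply put $t=1$: then $G(z,1)=(1-4z)^{-1/2}=\sum_{r\ge0}\binom{2r}{r}z^r$, so the number of lists with $r+\ell$ vertices is the central binomial coefficient $\binom{2r}{r}$.

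For the refined count I would expand $G$ using $4(z+(t-1)z^2)=4z\bigl(1-(1-t)z\bigr)$, which gives
\[
G(z,t)=\sum_{m\ge0}\binom{2m}{m}z^m\bigl(1-(1-t)z\bigr)^m,
\]
and then read off the coefficient of $z^rt^{r_2}$. Expanding $\bigl(1-(1-t)z\bigr)^m$ by the binomial theorem and using $[t^{r_2}](1-t)^{r-m}=(-1)^{r_2}\binom{r-m}{r_2}$, the coefficient collapses to a single alternating sum; after the substitution $i=r-m$ it reads
\[
[z^rt^{r_2}]G=(-1)^{r_2}\sum_{i\ge0}(-1)^i\binom{2(r-i)}{r-i}\binom{r-i}{i}\binom{i}{r_2}.
\]

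The main obstacle is to evaluate this sum in closed form. I would first rewrite $\binom{2(r-i)}{r-i}\binom{r-i}{i}=\binom{r}{i}\binom{2r-2i}{r}$ (a routine trinomial revision) and then $\binom{r}{i}\binom{i}{r_2}=\binom{r}{r_2}\binom{r-r_2}{i-r_2}$ (subset-of-a-subset), which reduces the task, with $N:=r-r_2$, to the identity $\sum_{j\ge0}(-1)^j\binom{N}{j}\binom{2N-2j}{r}=2^{N-r_2}\binom{N}{r_2}$. This last identity I would prove by the coefficient-extraction (``snake oil'') method: writing $\binom{2N-2j}{r}=[y^r](1+y)^{2N-2j}$ and summing the resulting geometric-type sum gives
\[
\sum_{j\ge0}(-1)^j\binom{N}{j}\binom{2N-2j}{r}=[y^r]\bigl((1+y)^2-1\bigr)^N=[y^r]y^N(2+y)^N=[y^{r_2}](2+y)^N=2^{N-r_2}\binom{N}{r_2}.
\]
Collecting the factors (the two signs $(-1)^{r_2}$ cancel) yields $[z^rt^{r_2}]G=2^{r-2r_2}\binom{r}{r_2}\binom{r-r_2}{r_2}=2^{r-2r_2}\binom{r}{r_2,r_2,r-2r_2}$, which is the claimed formula~\eqref{eq:lists}. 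An alternative, purely combinatorial route would exhibit a bijection between such lists and words of length $r$ over $\{X,Y,a,b\}$ having exactly $r_2$ letters $X$, exactly $r_2$ letters $Y$, and the remaining $r-2r_2$ letters in $\{a,b\}$, which are manifestly counted by $2^{r-2r_2}\binom{r}{r_2,r_2,r-2r_2}$; but the generating-function computation above is the more direct of the two.
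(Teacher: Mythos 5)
Your proof is correct, and for the first count it coincides with the paper's: both extract $[z^r]$ from $\frac{1}{1-B(z,z,1,1)}=(1-4z)^{-1/2}$. For the refined count, however, you take a genuinely different route. The paper argues combinatorially, interpreting each factor of $2^{r-2r_2}\binom{r}{2r_2}\binom{2r_2}{r_2}$: it first counts the sublists built only from oriented $3$-cycles (the factor $\binom{2r_2}{r_2}$, obtained from the generating function $(1-4x)^{-1/2}$ for such lists), then chooses with repetition the $r_1=r-2r_2$ places where plain arrows are inserted (the factor $\binom{r}{2r_2}=\binom{(2r_2+1)+r_1-1}{r_1}$), and finally orients each inserted arrow (the factor $2^{r-2r_2}$). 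You instead extract $[z^rt^{r_2}]$ directly from $\bigl(1-4(z+(t-1)z^2)\bigr)^{-1/2}$ and evaluate the resulting alternating sum by trinomial revision, the subset-of-a-subset identity, and a snake-oil evaluation of $\sum_j(-1)^j\binom{N}{j}\binom{2N-2j}{r}=[y^{r_2}](2+y)^N$; I checked these identities and the sign bookkeeping (the two factors of $(-1)^{r_2}$ do cancel), so the computation is sound. What each approach buys: your calculation is self-contained and mechanical, but it must discover the clean product form as the closed-form value of a non-trivial sum; the paper's decomposition explains \emph{why} the answer factors, and the auxiliary objects it introduces (the $3$-cycle skeleton and the insertion places) are reused implicitly in the subsequent count of symmetric quivers, so the combinatorial version carries more structural information forward.
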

\begin{proof}
  The generating function for elements in $B$ taking into account
  only the number of vertices is $B(p,p,1,1) = 1 - \sqrt{1-4p}$.
  Thus, we obtain that the number of lists of elements in $B$ with
  $r+\ell$ vertices in total is given by
  $$
  [p^r] \frac{1}{1-B(p,p,1,1)} = [p^r] \frac{1}{\sqrt{1-4p}} = [p^r] \sum_{n \geq
    0} \binom{2n}{n} p^n = \binom{2r}{r},
  $$
  compare \cite[Example 1.2.2(a) and Theorem 1.4.2]{BLL}.

  Let us now prove the more refined statement, by giving a meaning to
  each of the factors in the last expression of
  Equation~\eqref{eq:lists}.  We first observe that $r_1=r-2r_2$ is
  precisely the number of arrows that are not part of an oriented
  $3$-cycle, and thus $2^{r-2r_2}$ is the number of their possible
  orientations.

  The central binomial coefficient $\binom{2r_2}{r_2}$ can be
  interpreted as the number of lists $L^{B_\Delta}=(B_1, \dots,
  B_\ell)$ of elements in $B$, where all elements consist of oriented
  $3$-cycles only: namely, such a list is either empty, or its first
  element is an oriented $3$-cycle (with its two possible
  orientations), to which a rooted quiver of type $A$, consisting of
  oriented $3$ cycles only, is attached.  It is easy to see that the
  generating function for such rooted quivers is
  $\frac{1-\sqrt{1-4x}}{2x}$.  Let us denote the generating function
  for the lists under consideration $L^{B_\Delta}(x)$.  We then have:
  \begin{align*}
    L^{B_\Delta}(x)
    &=1+2x\frac{1-\sqrt{1-4x}}{2x} L^{B_\Delta}(x)\\
    &=1+(1-\sqrt{1-4x}) L^{B_\Delta}(x)\\
    &=\frac{1}{\sqrt{1-4x}}.
  \end{align*}

  Finally, $\binom{r}{2r_2}=\binom{(2r_2+1) + r_1 -1}{r_1}$ is the
  number of ways to choose $r_1$ vertices (with repetitions) in a
  list $L^{B_\Delta}$ where arrows can be inserted to obtain a list
  of elements in $B$ with $r+\ell$ vertices and $r_2$ oriented
  $3$-cycles.  Namely, there are $2r_2+\ell$ vertices in total in
  $L^{B_\Delta}$, all but the $\ell-1$ vertices which are at the left
  of the base-arrows in $B_2, \dots, B_\ell$ are possible insertion
  places.
\end{proof}

Given a list $L = (B_1,\dots,B_\ell)$ of elements in $B$, we identify $L$ with
the quiver obtained from $L$ by gluing together the right vertex in the base
arrow of $B_i$ and the left vertex in the base arrow of $B_{i+1}$ for $1 \leq i
< \ell$. 
For a list $L = (B_1,\dots,B_\ell)$ of elements in $B$ define the
\Dfn{reversed list} $\rev(L) := (\overline B_\ell,\dots,\overline B_1)$, where
$\overline B_i$ is obtained from $B_i$ by reversing the direction of the base
arrow of $B_i$ (and eventually of the associated oriented $3$-cycle). See
Figures~\ref{eg:symmetric quivers}$(b)$ and \ref{eg:symmetric quivers}$(c)$
for an example. 
Obviously, we have $\rev(\rev(L)) = L$.

\begin{figure}
  \begin{tabular}{ccc}
    \begin{tabular}{c}
      \includegraphics[scale=0.7]{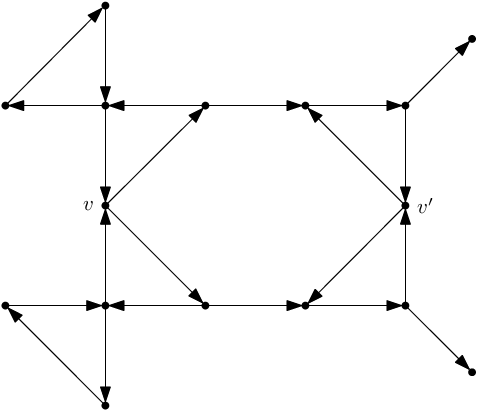} \\ $(a)$
    \end{tabular}
    \begin{tabular}{c}
      \includegraphics[scale=0.7]{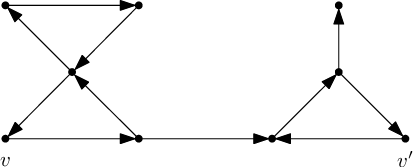} \\ $(b)$ \\[15pt]
      \includegraphics[scale=0.7]{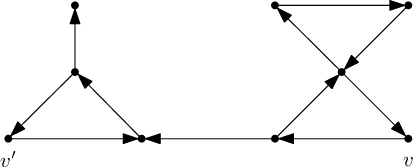} \\ $(c)$
    \end{tabular} & \hspace{7pt} &
  \end{tabular}
  \caption{$(a)$ a symmetric quiver of type $\Atilde_{15}$; $(b)$ the list $L$
    of elements in $B$ starting at $v$ end ending at $v'$; $(c)$ the list
    $\rev(L)$ of elements in $B$ starting at $v'$ end ending at $v$.}
  \label{eg:symmetric quivers}
\end{figure}

\begin{thm} \label{th:symmetric quivers} The number of symmetric
  quivers of type $\Atilde_{2r-1}$, i.e., quivers where both possible
  realizations coincide, is equal to $\frac{1}{2}\binom{2r}{r}$.  The
  number of symmetric quivers of type $\Atilde_{2r-1}$ with $2r_2$
  oriented $3$-cycles is
  \begin{equation*}
    2^{r-2r_2-1}\binom{r}{r_2,r_2,r-2r_2}
  \end{equation*}
\end{thm}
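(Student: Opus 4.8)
The plan is to view a symmetric quiver as a cyclic word over the alphabet $B$ and to cut it along its axis of symmetry. Regarding a realization (Definition~\ref{dfn:realization}) as a cyclic arrangement $(B_1,\dots,B_\ell)$ of elements of $B$ read up to rotation, the two realizations of a quiver are interchanged by the operation $\sigma$ that reverses the cyclic order and replaces each $B_i$ by $\overline{B_i}$; a symmetric quiver is therefore precisely a rotation class fixed by $\sigma$, and the number of symmetric quivers is the number of $\sigma$-fixed realizations. The first point I would record is that the letter-flip $B\mapsto\overline B$ is fixed-point-free, since reversing a base arrow exchanges types $(1),(2)$ with $(3),(4)$. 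Consequently no symmetry axis of a $\sigma$-fixed word can pass through a letter $B_i$: that letter would have to satisfy $\overline{B_i}=B_i$, i.e.\ its base arrow would equal its own reverse, producing a forbidden oriented $2$-cycle on the non-oriented base cycle. Hence every axis runs through two junction vertices, and cutting there splits the word into two halves that are reverses of one another.

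Carrying this out, I would define $\Phi(L)=[\,L\cdot\rev(L)\,]$, sending a list $L$ to the cyclic word obtained by gluing $L$ and $\rev(L)$ into a cycle, as in Figure~\ref{eg:symmetric quivers}$(b),(c)$. Since a symmetric quiver of type $\Atilde_{2r-1}$ has $2r$ vertices and the reflection exchanges its two halves, each half is a list of size $r$ (that is, a list with $r+\ell$ vertices in the notation of the preceding lemma), so $\Phi$ maps onto the symmetric quivers. Because $\rev(\rev(L))=L$ and $L\cdot\rev(L)$ is a cyclic rotation of $\rev(L)\cdot L$, we have $\Phi(L)=\Phi(\rev(L))$, so $\Phi$ is ``generically'' two-to-one; as the preceding lemma counts lists of size $r$ by $\binom{2r}{r}$, the formula $\tfrac12\binom{2r}{r}$ will follow once $\Phi$ is shown to be \emph{exactly} two-to-one.

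The main obstacle is precisely this exact-two-to-one count, which is delicate because $\rev$ is not fixed-point-free: whenever a half satisfies $L=\rev(L)$ (possible for $r$ even, e.g.\ $L=(B,\overline B)$) the word acquires a nontrivial rotational symmetry and is cut by several axes. To treat all cases uniformly I would fix a symmetric word $w=u^{\,d}$ with $u$ primitive, whose full symmetry group is the dihedral group $D_d$: its rotation subgroup $C_d$ has order $d$, and $D_d$ contains exactly $d$ reflections. By the first paragraph all $d$ axes pass through junction vertices, so each axis carries two fixed vertices, giving $2d$ pairs $(\text{axis},\text{fixed vertex})$; reading $w$ clockwise from the chosen fixed vertex to the opposite one produces a list. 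A nontrivial rotation moves every vertex, so $C_d$ acts freely on these $2d$ pairs, and two pairs yield the same list if and only if they lie in one $C_d$-orbit (rotations preserve the reading direction, whereas reflections reverse it). Hence the number of distinct lists mapping to $[w]$ is $2d/d=2$, which establishes that $\Phi$ is exactly two-to-one and gives the value $\tfrac12\binom{2r}{r}$.

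Finally, for the refined statement I would note that both $\rev$ and all rotations preserve the number of oriented $3$-cycles, and that every $3$-cycle of $w=L\cdot\rev(L)$ lies in exactly one half, none being fixed by the reflection (oriented $3$-cycles are attached to base arrows, which are off the axis and paired up). Thus all preimages $L$ of a symmetric quiver with $2r_2$ oriented $3$-cycles are lists of size $r$ with exactly $r_2$ oriented $3$-cycles. Since the preceding lemma counts such lists by $2^{r-2r_2}\binom{r}{r_2,r_2,r-2r_2}$ and $\Phi$ is two-to-one, dividing by $2$ yields $2^{r-2r_2-1}\binom{r}{r_2,r_2,r-2r_2}$, as claimed.
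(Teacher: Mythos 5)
Your proof is correct and follows essentially the same strategy as the paper: cut a symmetric quiver into a list $L$ and its reverse $\rev(L)$, invoke the preceding lemma to count lists of size $r$ (respectively, with $r_2$ oriented $3$-cycles), and show that exactly two lists correspond to each symmetric quiver. The only real difference lies in how the exactly-two-to-one claim is justified: you run a dihedral-group orbit count on the reflection axes and their fixed junction vertices, whereas the paper exhibits an explicit decomposition $L=(X,\rev(X),X,\dots,\rev(X))$ into primitive blocks; your version is, if anything, the more careful of the two, since it handles the case $L=\rev(L)$ uniformly.
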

\begin{proof}
  Starting with a list $L$ of elements in $B$ with a total of $r+\ell$ vertices,
  we obtain a symmetric quiver of type $\Atilde_{2r-1}$ by taking $L$ and
  $\rev(L)$, and gluing together the end point of $L$ with the start point of
  $\rev(L)$ and vice versa. E.g., the symmetric quiver in
  Figure~\ref{eg:symmetric quivers}$(a)$ is obtained from the lists $L$ and
  $\rev(L)$ shown in Figures~\ref{eg:symmetric quivers}$(b)$ and
  \ref{eg:symmetric quivers}$(c)$.

  To prove the statement it remains to show that exactly two different lists
  belong to the given symmetric quiver $Q$. Observe first, that $Q$ is of the
  form $Q=(L,L')$ where the end point of $L$ is glued together with the start
  point of $L'$ and vice versa, such that furthermore, $L' = \rev(L)$ is the
  reversed list of $L$. It may happen that $L$ is itself symmetric, i.e., $L =
  \rev(L)$. However, it is always possible to find a non-symmetric $X$ such
  that $Y := \rev(X) \neq X$ and $L=(X,Y,X,\dots,Y)$ and $L'=(\overline
  Y,\overline X,\overline Y,\dots, \overline X)$. That is, any symmetric quiver
  is of the following form:
  \begin{center}
    \includegraphics[scale=0.8]{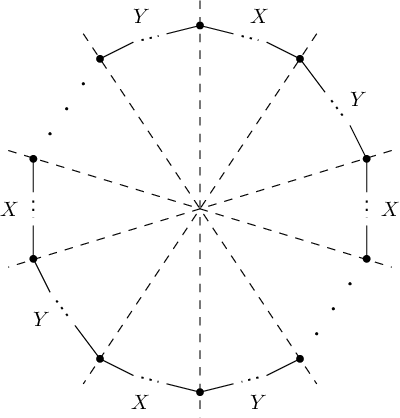}
  \end{center}

  This proves that there exist exactly two different lists that correspond to a
  symmetric quiver $Q$, namely $L$ and $L'$.
\end{proof}

We now know the number of \emph{realizations} of quivers as well as
the number of \emph{symmetric} quivers of type $\Atilde_{2r-1}$ with
parameters $r$ and $s=r$.  Therefore, we can also compute the total
number of quivers of type $\Atilde_{2r-1}$ with the same parameters:

\begin{cor}\label{cor:number-quivers-general}
  The number $\tilde a(r,r)$ of quivers of type $\Atilde_{2r-1}$ with
  parameters $r$ and $s=r$ is given by
  $$
  \frac{1}{2} \left(\frac{1}{2} \binom{2r}{r} + \sum_{k|r} \frac{\phi(k)}{4r}
  \binom{2r/k}{r/k}^2 \right).
  $$

  The number of quivers of type $\Atilde_{2r-1}$ with parameters
  $r_1$, $r_2$, $s_1$ and $s_2$ such that $r_1=s_1$ and $r_2=s_2$ is
  given by
  \begin{multline*} 
    2^{r-2r_2-2}\binom{r}{r_2,r_2,r-2r_2}\\ %
      +\sum_{\substack{k|r,k|r_2\\i,j\geq0\\(i,j)\neq(0,0)}}\frac{\phi(k)}k
    \frac{(-1)^{i+j}}{4(i+j)}
    \binom{2i}{i,2i-r/k,r_2/k,(r-r_2)/k-i}\binom{2j}{j,2j-r/k,r_2/k,(r-r_2)/k-j},
  \end{multline*}
  where $r=r_1+2r_2$.
\end{cor}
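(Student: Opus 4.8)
The plan is to count the quivers by relating them to the two objects we have already enumerated: realizations (Theorem~\ref{th:realization of quivers}) and symmetric quivers (Theorem~\ref{th:symmetric quivers}). The organizing principle is an orbit count under the involution $\iota$ that exchanges the two realizations of a quiver. First I would make $\iota$ explicit: it reflects the planar embedding of the non-oriented cycle, turning clockwise base arrows into anti-clockwise ones and vice versa. By Definition~\ref{dfn:realization} every quiver has at most two realizations, and these are exactly $\rho$ and $\iota(\rho)$; hence on the set $\mathcal R$ of realizations with the prescribed parameters the orbits of $\iota$ are precisely the quivers we want to count, and a realization is a fixed point of $\iota$ if and only if its quiver is symmetric (its two realizations coincide).

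The next step is to check that $\mathcal R$ is stable under $\iota$, so that no realization escapes to a different parameter set. Flipping interchanges the pairs $(r_1,r_2)$ and $(s_1,s_2)$. In the unrefined statement we have $r=s$, and in the refined statement we are in the fully symmetric case $r_1=s_1,\ r_2=s_2$; in both cases the parameters are $\iota$-invariant, so indeed $\iota$ maps $\mathcal R$ to itself. Orbit counting (Burnside's lemma for the group generated by $\iota$) then gives
$$
\#\{\text{quivers}\}=\tfrac12\bigl(|\mathcal R|+|\operatorname{Fix}(\iota)|\bigr)=\tfrac12\,(R+S),
$$
where $R$ denotes the number of realizations and $S$ the number of symmetric quivers. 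Equivalently, each asymmetric quiver is counted twice in $R$ and each symmetric quiver once, which recovers the same relation elementarily.

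Finally I would substitute the known formulas and simplify. For the unrefined statement, Theorem~\ref{th:realization of quivers} with $s=r$ gives $R=\sum_{k|r}\frac{\phi(k)}{4r}\binom{2r/k}{r/k}^2$ and Theorem~\ref{th:symmetric quivers} gives $S=\frac12\binom{2r}{r}$, so $\tfrac12(R+S)$ is immediately the first displayed expression. For the refined statement, $S=2^{r-2r_2-1}\binom{r}{r_2,r_2,r-2r_2}$ contributes the leading term $2^{r-2r_2-2}\binom{r}{r_2,r_2,r-2r_2}$, and the refined realization count (Theorem~\ref{th:realization of quivers} with $s=r$, $s_2=r_2$) contributes the triple sum. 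The crucial simplification is that the prefactor $(-1)^{(r+r_2+s+s_2)/k}$ collapses to $(-1)^{2(r+r_2)/k}=1$, since $k\mid r$ and $k\mid r_2$ force $(r+r_2)/k\in\Z$; halving then turns the $\frac1{2(i+j)}$ into $\frac1{4(i+j)}$, producing exactly the second displayed expression.

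The main obstacle will be the bookkeeping that makes the orbit-count rigorous rather than the algebra: one must confirm that both realizations of every quiver under consideration genuinely lie in $\mathcal R$ (this is where the conditions $r=s$, resp.\ $r_1=s_1,\ r_2=s_2$, are used), and that the symmetric quivers counted in Theorem~\ref{th:symmetric quivers} with $2r_2$ oriented $3$-cycles are exactly the fixed points relevant to the refined case, namely those with $r_2=s_2$ (so that the total number $r_2+s_2$ of oriented $3$-cycles equals $2r_2$). Once these identifications are in place, the sign cancellation and the coefficient comparison are routine.
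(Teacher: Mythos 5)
Your proposal is correct and follows essentially the same route as the paper: the paper likewise observes that the realization count of Theorem~\ref{th:realization of quivers} counts non-symmetric quivers twice and symmetric ones once, so that the number of quivers is $\tfrac12(R+S)$ with $S$ given by Theorem~\ref{th:symmetric quivers}, and then substitutes. Your additional checks (stability of the parameter set under the flip, the collapse of the sign $(-1)^{(r+r_2+s+s_2)/k}$ to $1$ when $s=r$, $s_2=r_2$, and the identification $r_2=s_2$ for symmetric quivers) are all valid and merely make explicit what the paper leaves as ``the general case is dealt with similarly''.
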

\begin{proof}
  According to Theorem~\ref{th:realization of quivers}, the
  expression $\sum_{k|r} \frac{\phi(k)}{4r} \binom{2r/k}{r/k}^2$
  counts realizations of quivers with parameters $r$ and $s=r$.
  Therefore, it counts non-symmetric quivers with parameters $r$ and
  $s=r$ twice and symmetric quivers with parameters $r$ and $s=r$
  once.  By Theorem~\ref{th:symmetric quivers}, the number of
  symmetric quivers with parameters $r$ and $s=r$ is given by
  $\frac{1}{2} \binom{2r}{r}$.  In total, we get the desired
  expression.  The general case is dealt with similarly.
\end{proof}

\begin{table}
  \centering
  \begin{tabular}{|c|rrrrr|}
    \hline
    2 &    1 &&&&\\
    3 &    2 &&&&\\
    4 &    5 &    4&&&\\
    5 &   14 &   12&&&\\
    6 &   42 &   36 &   22&&\\
    7 &  132 &  108 &  100&&\\
    8 &  429 &  349 &  315 &  172&\\
    9 & 1430 & 1144 & 1028 &  980&\\
   10 & 4862 & 3868 & 3432 & 3240 & 1651\\\hline
   \slashbox{n}{r}& 1   & 2    & 3    & 4    & 5 \\
   \hline
  \end{tabular}
  \bigskip
  \caption{Number of quivers of type $\Atilde_{n-1}$ according to the
    parameter $r$ for $n$ in $\{2,3,\dots,10\}$}
  \label{tab:numbers}
\end{table}

\subsection{The number of quivers of type $D_n$}

With the help of Corollary~\ref{cor: non-symm_case} and a little extra
work we obtain the number of quivers in the mutation class of Dynkin
type $D_n$. This result was first determined by Buan and Torkildsen in
\cite{BuanTorkildsen2009}.

\begin{cor} The number of quivers of type $D_n$, for $n \geq 5$, is given by
  $$\tilde{a}(0,n)=\sum_{d|n} \frac{\phi(n/d)}{2n} \binom{2d}{d}.$$
  The number of quivers of type $D_4$ is $6$.
\end{cor}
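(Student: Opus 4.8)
The plan is to realise quivers of type $D_n$ as the \lq degenerate\rq\ members of the combinatorial model of Section~\ref{sec:number-quivers}, namely those cyclic arrangements of elements of $B$ in which every base arrow is oriented in the same direction. Since the oriented $n$-cycle is of type $D_n$ (Type IV in \cite{Vatne2008}), its mutation class admits a description entirely parallel to Definition~\ref{dfn:description_mutation_class}, with the single non-oriented cycle replaced by an oriented one and with rooted quivers of type $A$ attached along oriented $3$-cycles exactly as before. Consequently a realization of such a quiver is precisely a cyclic arrangement of elements of $B$ all of whose base arrows point clockwise (or all anti-clockwise); in the bivariate generating function these are exactly the contributions to $p^nq^0$ (resp.\ $p^0q^n$). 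The first step is therefore to make this identification precise, using the Type~IV structure to check that no other members of the model arise and that the data attached to an oriented cycle is counted by the very same $B$ already used for the non-oriented case.

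Second, I would count these arrangements by specialising the generating function $\Atilde(p,q,1,1)=\sum_{k\geq1}\frac{\phi(k)}k\log\frac1{1-B(p^k,q^k,1,1)}$ to the coefficient of $p^nq^0$. Extracting $q^0$ is harmless: Lemma~\ref{lem:Gessel} and the power-series identity in its proof are valid whenever $(r,s)\neq(0,0)$, and at $s=0$ they give $[p^rq^0]\log\frac1{1-B(p,q,1,1)}=\frac1{2r}\binom{2r}r$ for $r\geq1$ (using $\binom00=1$). Combining this with the observation $[p^rq^s]F(p^k,q^k)=f_{r/k,s/k}$ when $k\mid r$ and $k\mid s$, exactly as in the proof of Theorem~\ref{th:realization of quivers}, one obtains
$$[p^nq^0]\Atilde(p,q,1,1)=\sum_{k\mid n}\frac{\phi(k)}k\cdot\frac k{2n}\binom{2n/k}{n/k}=\sum_{d\mid n}\frac{\phi(n/d)}{2n}\binom{2d}d,$$
after the substitution $d=n/k$. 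This is the number of all-clockwise realizations.

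Third, I would pass from realizations to quivers, which here is the \lq non-symmetric\rq\ situation of Corollary~\ref{cor: non-symm_case}. An oriented-cycle quiver carries the pair of parameters $(r,s)=(n,0)$ and $(0,n)$, which are distinct for $n\geq1$, so its all-clockwise and all-anti-clockwise realizations can never coincide and it has exactly two realizations. The $p\leftrightarrow q$ symmetry of $\Atilde$ shows that the two sets of realizations are equinumerous, and they are disjoint, so the number of quivers of type $D_n$ equals the number of all-clockwise realizations computed above. This yields $\tilde a(0,n)=\sum_{d\mid n}\frac{\phi(n/d)}{2n}\binom{2d}d$.

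Finally, the case $D_4$ must be treated by hand, since the general argument requires $n\geq5$: the formula above evaluates to $10$ at $n=4$, so the identification of type-$D_4$ quivers with all-co-oriented arrangements of elements of $B$ necessarily breaks down, and the value $6$ is obtained by direct enumeration of the mutation class of $D_4$. I expect the main obstacle to be the first step, namely rigorously matching the mutation class of the oriented $n$-cycle with the all-co-oriented arrangements, i.e.\ verifying that the Type~IV description is the oriented-cycle analogue of Definition~\ref{dfn:description_mutation_class} with the identical attaching data, together with pinning down precisely where this correspondence fails for $n=4$.
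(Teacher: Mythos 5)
Your steps 2 and 3 (extracting the coefficient of $p^nq^0$ from $\sum_k\frac{\phi(k)}{k}\log\frac{1}{1-B(p^k,q^k,1,1)}$ via Lemma~\ref{lem:Gessel}, and arguing that the all-clockwise and all-anti-clockwise realizations are disjoint and equinumerous) are exactly what the paper does, and the resulting formula is correct. The genuine gap is in step 1, and it is not merely a matter of ``making the identification precise'': the identification as you state it is false. Only Vatne's Type~IV quivers --- those containing an oriented cycle of length at least $3$ --- are literally cyclic arrangements of elements of $B$ with co-oriented base arrows. Quivers of types I, II and III contain no such cycle, so under your proposed dictionary they would be missing from the count entirely; conversely, the cycle construction applied to $B$ also produces arrangements whose ``main cycle'' degenerates to an oriented $2$-cycle or a loop, and these are not quivers in the sense of the paper at all (loops and oriented $2$-cycles are excluded throughout), yet they contribute to the coefficient you compute. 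The formula comes out right only because these two discrepancies exactly compensate, and establishing that is the actual content of the paper's proof: it constructs an explicit, non-obvious bijection sending an arrangement whose main cycle is a $2$-cycle to a quiver of type I or III (delete the two arrows of the $2$-cycle), and an arrangement whose main cycle is a loop to a quiver of type I or II (delete the loop vertex, then double the arrow, resp.\ the oriented $3$-cycle, at the root of the remaining rooted quiver of type $A$), and then verifies invertibility case by case. Your plan to ``check that no other members of the model arise'' cannot succeed, because other members do arise on both sides.

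A smaller point on $n=4$: the failure there is not a mysterious breakdown of the correspondence to be ``pinned down''; Vatne's classification, and hence the bijection above, simply requires $n\geq 5$, and for $n=4$ the paper (following \cite{BuanTorkildsen2009}) lists the six quivers directly. Your observation that the general formula gives $10$ at $n=4$ is correct and consistent with this.
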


\begin{proof}
  For $n = 4$, the quivers can be explicitly listed, see \cite{BuanTorkildsen2009}.
  We remark that their number does not agree with the general formula.
  Now, let $\bar D_n$, $n \geq 5$, be the family of cyclic arrangements of elements
  in $B$, with all base arrows oriented clockwise and a total of $n$ vertices.
  Thus, the elements in $\bar D_n$ are quivers with a distinguished oriented cycle,
  which we call the \Dfn{main cycle}.  Note that the main cycle may be an oriented
  $2$-cycle or even a loop.

  We want to show that the quivers of type $D_n$ are in bijection with those in
  $\bar D_n$.  To do so, we use the classification given by Vatne~\cite{Vatne2008},
  who distinguishes four types $I$--$IV$.  Quivers in
  $D_n$ of type $IV$ coincide with those objects in $\bar D_n$ whose main cycle
  consists of at least three arrows.  The other three types are as in
  Figure~\ref{fig:Dn-Quivers}.

  \begin{figure}[ht]
    \centering
    \begin{tabular}{c@{\hspace{1cm}}c}
    \includegraphics[scale=0.7]{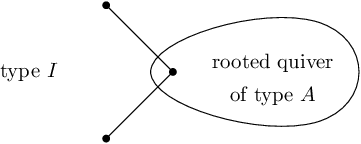} & \includegraphics[scale=0.7]{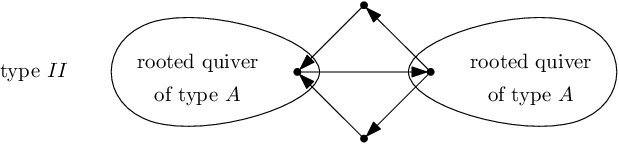}
    \end{tabular}
    
    \vspace{0.5cm}
    \centering
	\includegraphics[scale=0.7]{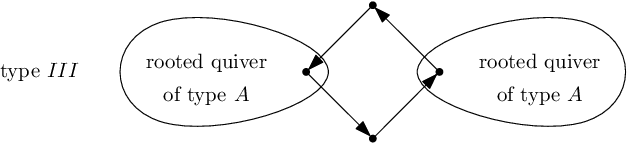}
    \caption{Quivers in $D_n$ of type $I$--$III$.}
    \label{fig:Dn-Quivers}
  \end{figure}

  Suppose that the main cycle of $\bar Q\in\bar D_n$ is an oriented $2$-cycle.
  By deleting these two arrows we obtain one of the following:
  \begin{enumerate}
  \item a quiver in $D_n$ of type $I$, where precisely one of the two
    distinguished arrows incident to the root is oriented towards it, or
  \item a quiver in $D_n$ of type $III$, i.e., a quiver having a unique
    oriented $4$-cycle.
  \end{enumerate}

  It remains to describe the bijection in the case where the main cycle of
  $\bar Q\in\bar D_n$ is a loop.  In a first step, we delete the vertex of this
  loop and all arrows incident to it, to obtain a rooted quiver $\bar
  Q^\bullet$ of type $A$.  For the second and final step, we distinguish two
  cases:
  \begin{enumerate}
  \item the root of $\bar Q^\bullet$ is incident to a single arrow $\alpha$.
    In this case we obtain a quiver $Q$ in $D_n$ of type $I$ by adding a second
    arrow, oriented in the same way as $\alpha$, to the other vertex $\alpha$
    is incident to.
  \item On the other hand, consider the case that the root of $\bar Q^\bullet$
    is incident to an oriented $3$-cycle $\gamma$.  Then, we glue a second
    $3$-cycle, oriented in the same way as $\gamma$, along the arrow of
    $\gamma$ opposite to the root.  In this way we create a quiver in $D_n$
    of type $II$.
  \end{enumerate}

  This transformation is invertible: 
  \begin{itemize}
  \item a quiver $Q$ in $D_n$ of type $I$ has a uniquely determined root,
    and two distinguished arrows incident to it.  If they are oriented in
    opposite directions, then the main cycle in the preimage of the
    transformation is an oriented $2$-cycle.  Otherwise, the preimage is a
    loop.
  \item $Q$ is of type $II$, if and only if it has two oriented $3$-cycles
    sharing an arrow.
  \item Finally, $Q$ is of type $III$, if and only if it has a unique oriented
    $4$-cycle.
  \end{itemize}

  To conclude, we compute the number of elements in $\bar D_n$.  This is easy,
  since we can use the degenerate case of $r=0$ and $s=n$ of
  Corollary~\ref{cor: non-symm_case}:
  \begin{eqnarray*}
    \tilde a(0,n)
    &=& \frac{1}{2} \sum_{k|n}\frac{\phi(k)}{n}\binom{2n/k}{n/k}\\
    &=& \frac{1}{2} \sum_{d|n}\frac{\phi(n/d)}{n}\binom{2d}{d},\quad\text{for $d:=\frac{n}{k}$}.
  \end{eqnarray*}
\end{proof}

\providecommand{\cocoa} {\mbox{\rm C\kern-.13em o\kern-.07em C\kern-.13em
  o\kern-.15em A}}
\providecommand{\bysame}{\leavevmode\hbox to3em{\hrulefill}\thinspace}
\providecommand{\href}[2]{#2}

\end{document}